\newtheorem{thm}{Theorem}[section]
\newtheorem{lmm}[thm]{Lemma}
\newtheorem{prp}[thm]{Proposition}
\theoremstyle{definition}
\newtheorem{dfn}[thm]{Definition}
\newtheorem{conj}[thm]{Conjecture}
\theoremstyle{remark}
\newtheorem*{rem}{Remark}
\numberwithin{thm}{section}
\numberwithin{equation}{section}
\newcommand{\C}{\mathbb{C}}
\newcommand{\F}{\mathbb{F}}
\newcommand{\Q}{\mathbb{Q}}
\newcommand{\Kb}{\overline{K}}
\newcommand{\kb}{\overline{k}}
\newcommand{\ksep}{k^{\text{sep}}}
\newcommand{\R}{\mathbb{R}}
\newcommand{\Z}{\mathbb{Z}}
\newcommand{\pP}{\mathbb{P}}
\newcommand{\rhob}{\overline{\rho}}
\newcommand{\mfq}{\mathfrak{q}}
\newcommand{\cO}{\mathcal{O}}
\newcommand{\cQM}{\mathcal{QM}}
\newcommand{\Gal}{\mathrm{Gal}}
\newcommand{\G}{\mathrm{G}}
\newcommand{\M}{\mathrm{M}}
\newcommand{\GL}{\mathrm{GL}}
\newcommand{\End}{\mathrm{End}}
\newcommand{\Aut}{\mathrm{Aut}}
\newcommand{\Norm}{\mathrm{Norm}}
\newcommand{\cf}{cf.\ }
\newcommand{\inj}{\hookrightarrow}
\newcommand{\resp}{resp.\ }
\newcommand{\disc}{\mathrm{disc}}
\newcommand{\id}{\mathrm{id}}
\newcommand{\cM}{\mathcal{M}}
\newcommand{\cN}{\mathcal{N}}
\newcommand{\cFR}{\mathcal{FR}}
\newcommand{\cS}{\mathcal{S}}
\newcommand{\cT}{\mathcal{T}}
\newcommand{\Ram}{\mathbf{Ram}}
\newcommand{\N}{\mathrm{N}}
\newcommand{\Ktil}{\widetilde{K}}
\newcommand{\sA}{\mathscr{A}}
\newcommand{\pmu}{\pmb{\mu}}
\newcommand{\ba}{\boldsymbol{a}}
\newcommand{\bc}{\boldsymbol{c}}
\newcommand{\bb}{\boldsymbol{b}}
\newcommand{\bo}{\boldsymbol{0}}
\title{On the Rasmussen-Tamagawa conjecture for QM-abelian surfaces}
\date{}
\author{Keisuke \textsc{Arai}\footnote{School of Engineering, Tokyo Denki University, Tokyo
120-8551, Japan.\newline e-mail: \texttt{araik@mail.dendai.ac.jp}}
}
\begin{document}
%

\maketitle


\begin{abstract}      
In a previous article, we showed the Rasmussen-Tamagawa conjecture for QM-abelian
surfaces over imaginary quadratic fields.
In this article, we generalize the previous work to QM-abelian surfaces
over number fields of higher degree.
We also give several explicit examples.
\end{abstract}


\section{Introduction}
\label{secIntro}

For a number field $K$ and
a prime number $p$,
let $\Kb$ denote an algebraic closure of $K$,
and let $\Ktil_p$ denote the maximal pro-$p$
extension of $K(\pmu_p)$ in $\Kb$ which is unramified away from $p$,
where $\pmu_p$ is the group of $p$-th roots of unity in $\Kb$.
For a number field $K$, an integer $g\geq 0$ and a prime number $p$, let
$\sA(K,g,p)$ denote the set of $K$-isomorphism classes of abelian varieties
$A$ over $K$, of dimension $g$, which satisfy 
\begin{equation}
\label{AKgp}
K(A[p^{\infty}])\subseteq\Ktil_p,
\end{equation}
where $K(A[p^{\infty}])$ is the subfield of $\Kb$ generated over $K$ by the $p$-power torsion of $A$.
 %
%
%
It follows from \cite[Theorem 1, p.493]{SeTa} that
an abelian variety $A$ over $K$
has good reduction at any prime of $K$ not dividing $p$
if its class belongs to $\sA(K,g,p)$,
because the extension $K(A[p^{\infty}])/K(\pmu_p)$
is unramified away from $p$.
%
So we can conclude that $\sA(K,g,p)$ is a finite set
(\cite[1. Theorem, p.309]{Z}, \cf\cite[Satz 6, p.363]{Fa}).
For fixed $K$ and $g$, define the set
$$\sA(K,g):=\{([A],p)\mid \text{$p$ : prime number, } [A]\in\sA(K,g,p)\}.$$
We have the following conjecture concerning finiteness for abelian varieties,
which is called the Rasmussen-Tamagawa conjecture (\cite[p.2391]{Oz1}):

\begin{conj}[{\cite[Conjecture 1, p.1224]{RaTa}}]
\label{RTconj}

Let $K$ be a number field, and let $g\geq 0$ be an integer.
Then the set $\sA(K,g)$ is finite.


\end{conj}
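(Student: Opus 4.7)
Each set $\sA(K,g,p)$ is already known to be finite (the excerpt cites \cite{Z}), so the essential content of the conjecture is that $\sA(K,g,p)=\emptyset$ for all sufficiently large $p$. The natural attack goes through the mod-$p$ Galois representation $\rhob\colon\Gal(\Kb/K)\to\GL_{2g}(\F_p)$ attached to an $A$ with $[A]\in\sA(K,g,p)$. Since $\Gal(\Ktil_p/K(\pmu_p))$ is pro-$p$, the image of $\Gal(\Kb/K(\pmu_p))$ under $\rhob$ is a $p$-subgroup of $\GL_{2g}(\F_p)$ and hence is conjugate to the upper-unitriangular subgroup by a Sylow argument; consequently every Jordan--H\"older factor of $\rhob$ is a power of the mod-$p$ cyclotomic character $\chi$, and the semisimplification of $\rhob$ takes the shape $\bigoplus_{i=1}^{2g}\chi^{k_i}$ for integers $0\le k_i\le p-2$.

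I would then fix auxiliary primes $\mfq$ of $K$ with residue characteristic $\ell\ne p$ and absolute norm bounded by an explicit constant depending only on $K$. By the theorem of Serre--Tate quoted in the excerpt, $A$ has good reduction at every such $\mfq$, so the Frobenius trace $a_\mfq\in\Z$ simultaneously satisfies the Weil bound $|a_\mfq|\le 2g\sqrt{\N\mfq}$ and the congruence $a_\mfq\equiv\sum_{i=1}^{2g}\N\mfq^{k_i}\pmod p$. Once $p$ exceeds an effective constant depending on $g$ and $\N\mfq$, these combine into an exact equality, and Honda--Tate theory for the reduction of $A$ modulo $\mfq$, together with the triangle inequality, force the weight multiset $\{k_i\}$ to be highly constrained. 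Running this over a finite set of auxiliary primes should determine the isogeny class of $A$ up to finitely many possibilities, after which a Faltings-style isogeny-finiteness argument leaves only finitely many $p$ for which $\sA(K,g,p)$ can be non-empty.

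The step I expect to be hardest is the analysis of admissible weight tuples. For $g=1$ this is essentially combinatorial and was carried out in \cite{RaTa}, but for general $g$ the number of candidate tuples $(k_1,\dots,k_{2g})$ grows like $(p-1)^{2g}$, and controlling them uniformly in $p$ requires extra input at the prime $p$ itself (Fontaine--Laffaille bounds, or information on the shape of the $p$-divisible group of $A$) which is not cleanly available without restricting the class of $A$. This is presumably what motivates the QM hypothesis of the paper: the action of a maximal order in an indefinite quaternion $\Q$-algebra splits $V_p(A)$ as $V\otimes B_p$ for a single two-dimensional Galois representation $V$, collapsing the weight analysis back to the size of the elliptic-curve case and making the uniform-in-$p$ argument of Rasmussen--Tamagawa tractable again.
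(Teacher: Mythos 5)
The statement you were asked about is a \emph{conjecture}, and the paper does not prove it: Conjecture \ref{RTconj} is open in general, and the article only establishes the special case $\sA(K,2)_B$ for QM-abelian surfaces over certain Galois number fields (Theorem \ref{main}), via irreducibility of the associated two-dimensional mod $p$ representation $\rhob_{(A,i)/K,p}$ (Lemma \ref{irredRT} combined with Theorem \ref{galrepirred}, the latter quoted from \cite{A3}). So your text should be judged as a proposed proof of an open problem, and as such it has genuine gaps rather than being a complete argument.

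Concretely, the first step is fine and is exactly the reformulation in Remark \ref{Ozeki}(1): pro-$p$-ness of $\Gal(\Ktil_p/K(\pmu_p))$ forces the semisimplification of $\rhob$ to be a sum of powers $\chi^{k_i}$ of the mod $p$ cyclotomic character, and Serre--Tate gives good reduction away from $p$. The breakdown comes precisely where you flag it, and flagging it does not repair it. The congruence $a_{\mfq}\equiv\sum_i \N\mfq^{k_i}\pmod p$ cannot be upgraded to an equality by the Weil bound alone, because the exponents $k_i$ may be as large as $p-2$, so the right-hand side is not bounded independently of $p$; one needs control of the tame inertia weights at $p$ (Raynaud/Fontaine--Laffaille type input), and making that uniform in $p$ for arbitrary $g$-dimensional $A$ is exactly the unsolved core of the conjecture (it is what Ozeki's results in \cite{Oz1}, \cite{Oz2} achieve only under semi-stability or abelian-image hypotheses, as recalled in Remark \ref{Ozeki}(5)). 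Your closing step also conflates two different finiteness statements: Honda--Tate plus a ``Faltings-style'' argument could at best bound the isogeny class of the reduction for a \emph{fixed} $p$, whereas the conjecture requires $\sA(K,g,p)=\emptyset$ for all $p$ beyond a single constant $C_{\mathrm{RT}}(K,g)$ (Remark \ref{Ozeki}(2)); no mechanism in your sketch produces such a bound uniform in $p$. Your final observation about the QM hypothesis collapsing the representation to a two-dimensional one is accurate and is indeed the paper's starting point, but the paper then needs the full strength of the class-field-theoretic and Frobenius-norm arguments behind Theorem \ref{galrepirred} (the sets $\cM'_i(K)$, $\cN'_1(K)$, etc.), none of which is supplied or replaced by your outline.
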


For elliptic curves, we have the following result related to
Conjecture \ref{RTconj} 
(owing to \cite[Theorem 7.1, p.153]{Ma3} and \cite[Theorem B, p.330]{Mo4}):

\begin{thm}[{\cite[Theorem 2, p.1224 and Theorem 4, p.1227]{RaTa}}]
\label{A(Q,1)}

Let $K$ be $\Q$ or a quadratic field which is not an imaginary quadratic field
of class number one.
Then the set $\sA(K,1)$ is finite.

\end{thm}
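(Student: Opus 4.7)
The plan is to show that $\sA(K,1)=\bigcup_p\sA(K,1,p)$ is finite by bounding the primes $p$ for which $\sA(K,1,p)$ can be non-empty; the finiteness of each individual $\sA(K,1,p)$ has already been established in the paragraph following \eqref{AKgp}. Thus the whole game is to prove that only finitely many $p$ contribute.

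The first step is a standard Galois-theoretic analysis of the mod-$p$ representation $\rhob_{E,p}\colon G_K\to\GL_2(\F_p)$ attached to a class $[E]\in\sA(K,1,p)$. The defining hypothesis $K(E[p])\subseteq\Ktil_p$ forces $\rhob_{E,p}(G_{K(\pmu_p)})$ to be a finite quotient of $\Gal(\Ktil_p/K(\pmu_p))$, which is pro-$p$ by construction, and hence a $p$-group. For $p\geq 3$, every $p$-subgroup of $\GL_2(\F_p)$ is contained in the unipotent radical of some Borel, whose normalizer in $\GL_2(\F_p)$ is that Borel itself. Since $G_{K(\pmu_p)}\triangleleft G_K$, the image $\rhob_{E,p}(G_K)$ normalizes this $p$-subgroup and so lies in a Borel; equivalently, $E$ admits a $K$-rational subgroup of order $p$. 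The finitely many small primes (in particular $p=2$) can be ignored throughout since they already lie in the desired finite set.

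The second step invokes the classification of $K$-rational prime-degree isogenies. For $K=\Q$, Mazur's theorem \cite[Theorem 7.1]{Ma3} restricts the possible $p$ to an explicit finite list, so $\sA(\Q,1,p)=\emptyset$ for all other $p$. For a quadratic field $K$, Momose's theorem \cite[Theorem B]{Mo4} provides the analogous finiteness statement, provided one separately controls CM elliptic curves. The hypothesis that $K$ is not an imaginary quadratic field of class number one is precisely what guarantees that no elliptic curve defined over $K$ has complex multiplication by $\cO_K$; the remaining CM possibilities (CM by an order in some imaginary quadratic field $F\neq K$) contribute to $\sA(K,1,p)$ for only finitely many $p$, as one verifies by examining the Grossencharacter attached to the CM curve and noting that its restriction to $G_{KF}$ cannot be a near-power of the cyclotomic character once $p$ is large.

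The main obstacle is Step 2 and, within it, the CM analysis over quadratic fields. The Galois reduction of Step 1 is elementary, but Mazur's theorem is deep and its quadratic analogue by Momose is delicate; moreover, one must verify that the excluded case in the hypothesis is the only obstruction to finiteness, lest a stray family of CM curves smuggle in infinitely many primes $p$. Once these inputs are combined with the finiteness of each $\sA(K,1,p)$, the set $\sA(K,1)$ appears as a finite union of finite sets, which proves the theorem.
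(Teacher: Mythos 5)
The paper itself contains no proof of this theorem: it is quoted from Rasmussen--Tamagawa, whose argument rests on exactly the two deep inputs you invoke (Mazur's Theorem 7.1 over $\Q$ and Momose's Theorem B over quadratic fields), so your outline follows the same route as the cited source rather than anything internal to this article. Within that route, your reduction to a $K$-rational $p$-isogeny has one degenerate case you did not address: if $\rhob_{E,p}(\G_{K(\pmu_p)})$ is trivial, there is no nontrivial $p$-subgroup to work with, its normalizer is all of $\GL_2(\F_p)$, and your Borel argument gives nothing. The conclusion still holds there, because $\rhob_{E,p}(\G_K)$ then factors through the cyclic group $\Gal(K(\pmu_p)/K)$, whose order divides $p-1$; a generator is a semisimple element with eigenvalues in $\F_p^{\times}$ (as $x^{p-1}-1$ splits in $\F_p$), hence stabilizes a line, so the image again lands in a Borel. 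This is in effect what \cite[Lemma 3, p.1225]{RaTa} (recalled as part (1) of the Remark in \S\ref{secIntro}) provides, and it even yields the stronger filtration by powers of the mod $p$ cyclotomic character, which you do not need: a $K$-rational subgroup of order $p$ suffices as input to Mazur and Momose.

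Your separate CM analysis via Gr\"ossencharacters is likewise more than is required: Momose's Theorem B is stated precisely for quadratic fields that are not imaginary quadratic of class number one and already accounts for CM curves (over $\Q$, Mazur's list contains the CM primes), the excluded case being genuinely necessary since an imaginary quadratic $K$ of class number one carries curves with CM by $\cO_K$ admitting $K$-rational $p$-isogenies for the infinitely many $p$ split in $K$. With the trivial-image case patched as above, your argument is complete: only finitely many $p$ have $\sA(K,1,p)\ne\emptyset$, and each $\sA(K,1,p)$ is finite by Zarhin's theorem, as recalled after (\ref{AKgp}).
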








We are interested in higher dimensional cases,
in particular,
in the case of QM-abelian surfaces,
which are analogous to elliptic curves.
Let $B$ be an indefinite quaternion division algebra over $\Q$. 
Let
$$d=\disc(B)$$
be the discriminant of $B$. 
Then $d>1$ and $d$ is the product of an even number of distinct prime numbers.
Choose and fix a maximal order $\cO$ of $B$. 
If a prime number $p$ does not divide $d$, fix an isomorphism 
$$\cO\otimes_{\Z}\Z_p\cong \M_2(\Z_p)$$
of $\Z_p$-algebras.
Now we recall the definition of QM-abelian surfaces.

\begin{dfn}[\cf {\cite[p.591]{Bu}}]
\label{defqm}

Let $S$ be a scheme over $\Q$. 
A QM-abelian surface by $\cO$
over $S$ is a pair $(A,i)$ where $A$ is an 
abelian surface over $S$ (i.e. $A$ is an abelian scheme over $S$ 
of relative dimension $2$), and 
$i:\cO\inj\End_S(A)$ 
is an injective ring homomorphism (sending $1$ to $\id$).
Here $\End_S(A)$ is the ring of endomorphisms of $A$ defined over $S$.
We assume that $A$ has a left $\cO$-action.
We will sometimes omit ``by $\cO$" and simply write
``a QM-abelian surface"
if there is no fear of confusion.

\end{dfn}

For a number field $K$ and a prime number $p$,
let $\sA(K,2,p)_B$ be the set of $K$-isomorphism classes of
abelian varieties $A$ over $K$ in $\sA(K,2,p)$
such that there is an injective ring homomorphism
$\cO\inj\End_K(A)$
(sending $1$ to $\id$).
Let us also define the set
$$\sA(K,2)_B:=\{([A],p)\mid \text{$p$ : prime number, } [A]\in\sA(K,2,p)_B\}.$$
Let $h_K$ denote the class number of $K$.
%
Conjecture \ref{RTconj} for QM-abelian surfaces
has been partly confirmed.

\begin{thm}[{\cite[Theorem 9.3]{AM}}, \cf\cite{AM2}]
\label{RTB}

Let $K$ be an imaginary quadratic field with $h_K\geq 2$.
Then the set $\sA(K,2)_B$ is finite.

\end{thm}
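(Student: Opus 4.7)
The plan is to combine Galois representation analysis with the theory of $K$-rational points on Shimura curves. Since every individual $\sA(K,2,p)_B$ is finite (as noted in the introduction), finiteness of $\sA(K,2)_B$ will follow if one shows that $\sA(K,2,p)_B = \emptyset$ for all sufficiently large prime numbers $p$. So fix a hypothetical $[A] \in \sA(K,2,p)_B$ with $p$ large (in particular $p \nmid d$) and aim for a contradiction.

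First I would set up the Galois representation. Using the fixed isomorphism $\cO \otimes_\Z \Z_p \cong \M_2(\Z_p)$, the Tate module $T_p(A)$ becomes a free $\M_2(\Z_p)$-module of rank one. Since the $\cO$-action is defined over $K$, cutting $T_p(A)$ by the standard idempotent of $\M_2(\Z_p)$ produces a rank-two $G_K$-stable $\Z_p$-lattice $T$, yielding $\rho : G_K \to \GL_2(\Z_p)$ together with its reduction $\rhob : G_K \to \GL_2(\F_p)$. The hypothesis $K(A[p^\infty]) \subseteq \Ktil_p$ translates into: $\rho$ is unramified outside $p$, and $\rho(G_{K(\pmu_p)})$ is pro-$p$. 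Consequently $\rhob(G_{K(\pmu_p)})$ lies in a Sylow $p$-subgroup of $\GL_2(\F_p)$, i.e.\ a unipotent subgroup. Hence $\rhob$ takes values in a Borel subgroup, and is reducible: up to conjugation, $\rhob \sim \bigl(\begin{smallmatrix} \lambda & * \\ 0 & \lambda' \end{smallmatrix}\bigr)$ for characters $\lambda, \lambda' : G_K \to \F_p^\times$.

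Next I would pin down the diagonal characters. Since $A$ has good reduction away from $p$ and the $\cO$-action extends to the Néron model, Raynaud's theorem on finite flat group schemes applied to $A[p]$ at primes of $K$ above $p$ forces $\lambda$ and $\lambda'$ on inertia to be explicit products of fundamental characters; combined with the global constraint from the Weil pairing ($\lambda \lambda' = \chi_p$, the mod-$p$ cyclotomic character) and unramifiedness away from $p$, one obtains a short list of possibilities for $(\lambda, \lambda')$. The existence of a $G_K$-stable line in $A[p]$ as an $(\cO/p)$-module is equivalent to saying that $(A,i)$, together with the corresponding cyclic subgroup, defines a $K$-rational point on the Shimura curve $X^d_0(p)$ classifying QM-abelian surfaces with a level-$p$ structure.

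Finally I would finish via rational points on Shimura curves. For $K$ imaginary quadratic, Jordan-type results (and their refinements) assert that, for $p$ large enough, every $K$-rational point of $X^d_0(p)$ is a CM point; further analysis via the Atkin-Lehner quotient, together with the arithmetic of CM points on Shimura curves, shows that any such CM point is defined over the Hilbert class field of some imaginary quadratic subfield, and the assumption $h_K \geq 2$ rules out the remaining CM possibilities (mirroring how Momose's argument for elliptic curves uses the absence of CM elliptic curves over quadratic fields of suitable class number). This yields the desired emptiness. The hardest step will be the last one: controlling $K$-rational points on $X^d_0(p)$ uniformly in $p$, as this requires a Shimura-curve analogue of Mazur's formal immersion method (in the spirit of Jordan and Skorobogatov) together with a careful case-by-case elimination of CM points according to the discriminant of $K$ and the splitting behavior of $d$; it is precisely here that the hypothesis $h_K \geq 2$ is essential.
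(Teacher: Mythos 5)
Your overall skeleton is the right one and matches the strategy of this paper: each $\sA(K,2,p)_B$ is finite, so it suffices to show emptiness for all large $p$; membership in $\sA(K,2,p)$ forces the two-dimensional reduction $\rhob_{(A,i)/K,p}$ to be reducible (this is exactly the content of Lemma \ref{irredRT}, and your Sylow/normality argument can be made to work, though the paper instead uses the Rasmussen--Tamagawa filtration on $A[p]$ directly); and one then needs a statement killing the reducible case for $p$ large. The divergence, and the genuine gap, is in how that last statement is obtained. The cited proof (in [AM], and the analogous argument for Theorem \ref{main} here) proves \emph{directly} that $\rhob_{(A,i)/K,p}$ is irreducible for $p$ large (Theorem \ref{galrepirred} is the higher-degree version), by the isogeny-character method of Mazur--Momose adapted to the QM setting: one studies the character giving the Galois action on the stable line, pins it down at primes above $p$ and via class field theory, and derives a contradiction using the class-number and splitting hypotheses. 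Your proposal instead converts the stable line into a $K$-rational point on $M_0^B(p)$ and then appeals to a ``Jordan-type'' theorem that all such points are CM for large $p$, to be eliminated using $h_K\geq 2$.

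Two concrete problems with that route. First, the theorem you invoke at the decisive step is essentially the main theorem of [AM] itself (the source being cited), so as written the proposal does not prove the hard part; and the mechanism you suggest for it, a Shimura-curve analogue of Mazur's formal immersion method, is not how these results are proved (Shimura curves have no cusps, and the known proofs --- Jordan's and Arai--Momose's --- proceed via the isogeny character, class field theory, and CM theory, not formal immersions). Second, even granting a result like Theorem \ref{M0BpK}, it does not directly yield the contradiction: the paper explicitly notes that there is no implication between the statement about $M_0^B(p)(K)$ and the irreducibility statement, because $M_0^B(p)$ is only a coarse moduli scheme and because the conclusion allows elliptic points of order $2$ or $3$ (CM points). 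Your pair $(A,i,V)$ over $K$ could land on such a point, so a separate argument eliminating the CM/elliptic possibilities over $K$ (this is where $h_K\geq 2$ and the arithmetic of CM abelian surfaces enter in the actual proof) is required and is only gestured at in your sketch. In short: correct scaffolding, but the irreducibility (or CM-elimination) core --- the actual content of the theorem --- is missing, and the proposed tool for it points in the wrong direction.
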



The main result of this article is the following theorem,
which is a generalization of Theorem \ref{RTB}
to number fields of higher degree.

\begin{thm}
\label{main}

Let $K$ be a finite Galois extension of $\Q$ which does not contain
the Hilbert class field of any imaginary quadratic field.
Assume that there is a prime number $q$ which
splits completely in $K$ and satisfies
$B\otimes_{\Q}\Q(\sqrt{-q})\not\cong\M_2(\Q(\sqrt{-q}))$.
Then the set $\sA(K,2)_B$ is finite.

\end{thm}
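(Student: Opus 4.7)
The plan is to extend the strategy of \cite{AM} from imaginary quadratic base fields to general Galois number fields, using the prime $q$ of the hypothesis to play the role that the class-number condition did there. Suppose, for contradiction, that $\sA(K,2)_B$ is infinite; since $\sA(K,2,p)_B$ is finite for each individual $p$, it suffices to bound the primes $p$ for which $\sA(K,2,p)_B \neq \emptyset$. Fix $([A],p)\in \sA(K,2)_B$ with $p$ large, so in particular $p\nmid d$ and $p\neq q$. Since $\cO\otimes\Z_p\cong\M_2(\Z_p)$, the QM-action yields a decomposition $A[p]\cong W\oplus W$ as $(\cO/p)[G_K]$-modules, and the problem reduces to controlling the $2$-dimensional $\F_p[G_K]$-module $W$.

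The first main step is a local analysis at primes of $K$ above $p$. The condition \eqref{AKgp} forces the image of $G_K$ on $W$ to factor, modulo its pro-$p$ part, through the mod-$p$ cyclotomic quotient of $\Gal(K(\pmu_p)/K)$. Combined with Raynaud's classification of finite flat group schemes killed by $p$ (applied to the prolongation of $A[p]$ at primes above $p$, which exists since $A$ has good reduction everywhere by Serre-Tate), the semisimplification of $W$ is either reducible with characters $\chi_{\mathrm{cyc}}$ and the trivial character, or irreducible with Frobenius-conjugate eigenvalues coming from level-$2$ fundamental characters. The hypothesis that $K/\Q$ is Galois is used here to ensure that the ramification indices of primes above $p$ are uniform, so the weight conditions from the fundamental characters can be applied globally.

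The second step exploits the prime $q$. Let $\mfq$ be a prime of $K$ above $q$; since $q$ splits completely in $K$, the residue field is $\F_q$ and $A$ has good reduction at $\mfq$. Let $T^2-a_\mfq T+q\in\Z[T]$ be the characteristic polynomial of $\Frob_\mfq$ on $W$. The local analysis forces $a_\mfq\equiv 1+q\pmod p$ in the reducible case and $a_\mfq\equiv 0\pmod p$ in the irreducible case. Against the Hasse-Weil bound $|a_\mfq|\leq 2\sqrt q$, both congruences bound $p$ by an explicit constant depending only on $q$. Moreover, in the irreducible case the equation $a_\mfq=0$ gives $\pi_\mfq^2=-q$ for the Frobenius endomorphism $\pi_\mfq$ of the reduction, so $\Q(\sqrt{-q})$ would embed into the centralizer of $\cO$ inside the endomorphism algebra of the reduction; tracing through the classification of endomorphism algebras of abelian surfaces forces $\Q(\sqrt{-q})\hookrightarrow B$, contradicting the hypothesis on $q$.

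It remains to dispose of the CM locus. A QM-abelian surface $A/K$ with $\End_{\Kb}(A)\otimes\Q\supsetneq B$ has endomorphism algebra $\M_2(F)$ for some imaginary quadratic field $F$ that embeds into $B$, and as a $K$-point on the Shimura curve $X^B$ it is a CM-point whose field of definition contains the Hilbert class field of $F$; the hypothesis on $K$ thus restricts $F$ to the finitely many imaginary quadratic fields of class number one, and the Frobenius analysis above excludes these as well. The main obstacle I foresee is the local analysis in the irreducible case: extracting the clean congruence $a_\mfq\equiv 0\pmod p$ over a possibly non-abelian Galois extension $K/\Q$ requires careful use of Raynaud/Fontaine-Laffaille theory together with the splitting hypothesis on $q$, and constitutes the technical heart of the argument.
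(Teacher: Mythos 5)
Your plan does not follow the paper's route, and as written it has a genuine gap. The paper's proof of Theorem \ref{main} is short: by Remark (1), condition \eqref{AKgp} gives a full $\G_K$-filtration of $A[p]$ whose graded pieces are powers of the mod $p$ cyclotomic character, and Lemma \ref{irredRT} turns this, via an elementary matrix argument, into the statement that the two-dimensional representation $\rhob_{(A,i)/K,p}$ is \emph{reducible}; the quoted Theorem \ref{galrepirred} (from \cite{A3}) says this is impossible once $p>4q$, $p\nmid d$, $p\notin\cN'_1(K)$, so only finitely many $p$ occur, and each $\sA(K,2,p)$ is finite by Zarhin/Faltings. You instead attempt to reprove the hard input (the content of Theorem \ref{galrepirred}) from scratch, which is legitimate in principle, but your sketch breaks at exactly the step the cited work is designed to handle.

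Concretely, two problems. First, your local analysis at $v\mid p$ rests on the claim that $A$ has good reduction everywhere ``by Serre--Tate'', so that $A[p]$ prolongs to a finite flat group scheme over $\cO_{K_v}$; but Serre--Tate/N\'eron--Ogg--Shafarevich only gives good reduction at primes \emph{not} dividing $p$. A QM-abelian surface has potentially good reduction at $v\mid p$, but only after a ramified extension, so Raynaud's bounds apply only after that base change and no longer force the diagonal characters of $W$ to be exactly $\{1,\theta_p\}$. Second, and consequently, your case division is both partly vacuous and incomplete: under \eqref{AKgp} the semisimplification of $W$ is automatically $\theta_p^{a}\oplus\theta_p^{1-a}$ (so your ``irreducible, level-$2$ fundamental character'' case never arises here), and the real difficulty is excluding exponents $a$ other than $0,1$ modulo $p-1$ --- in particular the half-cyclotomic-type case, which is precisely the case that in \cite{Mo4}, \cite{AM}, \cite{A3} requires the $12h_K$-th power trick, the class-group generators $\cS$, the sets $\cM'_2(K)$, $\cN'_1(K)$, and it is exactly there that the hypotheses that $K$ contains no Hilbert class field of an imaginary quadratic field and that $B\otimes_{\Q}\Q(\sqrt{-q})\not\cong\M_2(\Q(\sqrt{-q}))$ are consumed. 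Your congruences $a_\mfq\equiv 1+q$ and $a_\mfq\equiv 0 \pmod p$ only cover the two easy endpoints, and you yourself flag the missing local analysis as an unresolved obstacle; so the plan, as it stands, does not yield the required bound on $p$. Either carry out that intermediate case in full (essentially redoing \cite[Theorem 6.5]{A3}), or do what the paper does: prove only the reducibility statement from \eqref{AKgp} and invoke Theorem \ref{galrepirred}.
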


In the next section, we prove Theorem \ref{main}.
In \S \ref{secEx}, we give examples of the main result after recalling needed facts in \S \ref{secMB}.


\begin{rem}
\label{Ozeki}

(1)
The condition (\ref{AKgp}) is equivalent to the following assertion
(see \cite[Lemma 3, p.1225]{RaTa} or \cite[Definition 4.1, p.2390]{Oz1}):

\noindent
The abelian variety $A$ has good reduction outside $p$,
and
the group
$A[p](\Kb)$
consisting of $p$-torsion points of $A$
has a filtration of $\G_K$-modules
$\{ 0\}=V_0\subseteq V_1\subseteq\cdots\subseteq V_{2g-1}\subseteq V_{2g}=A[p](\Kb)$
such that $V_i$ has dimension $i$ for each $1\leq i\leq 2g$,
where $\G_K$ is the absolute Galois group of $K$.
Furthermore, for each $1\leq i\leq 2g$, there is an integer
$a_i\in\Z$ such that $\G_K$ acts on $V_i/V_{i-1}$
by $gv=\theta_p(g)^{a_i}v$,
where $g\in\G_K$, $v\in V_i/V_{i-1}$, and $\theta_p$ is the mod $p$ cyclotomic character.

(2)
Conjecture \ref{RTconj} is equivalent to the following assertion:

\noindent
There exists a constant $C_{\text{RT}}(K,g)>0$ depending on $K$ and $g$ such that
we have $\sA(K,g,p)=\emptyset$
for any prime number $p>C_{\text{RT}}(K,g)$.

(3)
The set $\sA(K,2,p)_B$ (\resp $\sA(K,2)_B$)
is a subset of $\sA(K,2,p)$ (\resp $\sA(K,2)$).
If one of the following two conditions
is satisfied, we know that the sets $\sA(K,2,p)_B$, $\sA(K,2)_B$ are empty
for a trivial reason: there are no QM-abelian surfaces by $\cO$ over $K$
(\cite[Theorem 0, p.136]{Sh}, \cite[Theorem (1.1), p.93]{J}).

\noindent
(i) $K$ has a real place.

\noindent
(ii) $B\otimes_{\Q}K\not\cong\M_2(K)$.

(4)
Let $\cQM$ be the set of isomorphism classes of indefinite quaternion division
algebras over $\Q$.
Define the set
$$\sA(K,2)_{\cQM}:=\bigcup_{B\in\cQM}\sA(K,2)_B$$  
which is a subset of $\sA(K,2)$.
We then have the following corollary to Theorem \ref{RTB} (see \cite[Corollary 9.5]{AM}):

\noindent
Let $K$ be an imaginary quadratic field with $h_K\geq 2$.
Then the set $\sA(K,2)_{\cQM}$ is finite.

(5)
Conjecture \ref{RTconj} is solved for any $K$ and $g$
when restricted to semi-stable abelian varieties (\cite[Corollary 4.5, p.2392]{Oz1})
or abelian varieties
with abelian Galois representations (\cite[Theorem 1.2]{Oz2}).
See also \cite[\S 6]{A2} for a summary.

\end{rem}

\vspace{5mm}
\noindent
{\bf Notation}

\vspace{1mm}

For a field $k$,
let $\kb$ denote an algebraic closure of $k$,
let $\ksep$ 
denote the separable closure
of $k$ inside $\kb$,
and let $\G_k=\Gal(\ksep/k)$.
%

For an integer $n\geq 1$ and a commutative group (or a commutative group scheme) $G$,
let $G[n]$ denote the kernel of multiplication by $n$ in $G$.
%

%

For a prime number $p$ and an abelian variety $A$ over a field $k$,
let
$\displaystyle T_pA:=\lim_{\longleftarrow}A[p^n](\kb)$
be the $p$-adic Tate module of $A$,
where the inverse limit is taken with respect to
multiplication by $p$ : $A[p^{n+1}](\kb)\longrightarrow A[p^n](\kb)$.

For a number field $K$, 
let $\cO_K$ denote the ring of integers of $K$,
let $K_v$ denote the completion of $K$ at $v$
where $v$ is a place (or a prime) of $K$,
and let $\Ram (K)$ denote the set of prime numbers which are ramified in $K$.


\section{Galois representations}
\label{secGalrep}

A QM-abelian surface 
has a Galois representation which looks like 
that of an elliptic curve
as explained below (\cf\cite{Oh}). 
Let $k$ be a field of characteristic $0$, and
let $(A,i)$ be a QM-abelian surface by $\cO$ over $k$,
where $\cO$ is a fixed maximal order of $B$ which is
an indefinite quaternion division algebra over $\Q$.
We consider the Galois representations associated to $(A,i)$.
Take a prime number $p$ not dividing $d=\disc(B)$.
We then have isomorphisms of $\Z_p$-modules:
$$\Z_p^4\cong T_pA\cong\cO\otimes_{\Z}\Z_p\cong\M_2(\Z_p).$$
The middle is also an isomorphism of left $\cO$-modules;
the last is also an isomorphism of $\Z_p$-algebras (which was fixed in \S\ref{secIntro}).
We sometimes identify these $\Z_p$-modules.
Take a $\Z_p$-basis
$$e_1=\left(\begin{matrix} 1\ &0 \\ 0\ &0\end{matrix}\right),\ 
e_2=\left(\begin{matrix} 0\ &1 \\ 0\ &0\end{matrix}\right),\ 
e_3=\left(\begin{matrix} 0\ &0 \\ 1\ &0\end{matrix}\right),\ 
e_4=\left(\begin{matrix} 0\ &0 \\ 0\ &1\end{matrix}\right)$$
of $\M_2(\Z_p)$.
Then the image of the natural map
$$\M_2(\Z_p)\cong\cO\otimes_{\Z}\Z_p\hookrightarrow\End(T_pA)\cong\M_4(\Z_p)$$
lies in
$\left\{\left(\begin{matrix} aI_2\ &bI_2 \\ cI_2\ &dI_2\end{matrix}\right)
\Bigg|\, a,b,c,d\in\Z_p\right\}$,
where 
$I_2=\left(\begin{matrix} 1\ &0 \\ 0\ &1\end{matrix}\right)$.
%
The $\G_k$-action on $T_pA$ induces a representation
$$\rho_{A/k,p}:\G_k\longrightarrow\Aut_{\cO\otimes_{\Z}\Z_p}(T_pA)\subseteq\Aut(T_pA)
\cong\GL_4(\Z_p),$$
where
$\Aut_{\cO\otimes_{\Z}\Z_p}(T_pA)$
is the group of automorphisms of $T_pA$
commuting with the action of $\cO\otimes_{\Z}\Z_p$.
The above observation implies
$$\Aut_{\cO\otimes_{\Z}\Z_p}(T_pA)=
\left\{\left(\begin{matrix} 
X\ &0 \\ 0\ &X
\end{matrix}\right)
\Bigg|\, X\in\GL_2(\Z_p)\right\}
\subseteq\GL_4(\Z_p).$$
Then the representation $\rho_{A/k,p}$ factors as
$$\rho_{A/k,p}:\G_k\longrightarrow
\left\{\left(\begin{matrix} X\ &0 \\ 0\ &X\end{matrix}\right)
\Bigg|\, X\in\GL_2(\Z_p)\right\}
\subseteq\GL_4(\Z_p).$$
Let
$$\rho_{(A,i)/k,p}:\G_k\longrightarrow \GL_2(\Z_p)$$
denote the Galois representation determined by
``$X$",
so that we have
$\rho_{(A,i)/k,p}(\sigma)=X(\sigma)$
if $\rho_{A/k,p}(\sigma)=\left(\begin{matrix} X(\sigma)\ &0 \\ 0\ &X(\sigma)\end{matrix}\right)$
for $\sigma\in\G_k$.
Let
$$\overline{\rho}_{A/k,p}:\G_k\longrightarrow \GL_4(\F_p)\ \ \ \ \ (\text{\resp}\ \ 
\overline{\rho}_{(A,i)/k,p}:\G_k\longrightarrow \GL_2(\F_p))$$
denote the reduction of
$\rho_{A/k,p}$ (\resp $\rho_{(A,i)/k,p}$) modulo $p$. 
Note that this construction of $\overline{\rho}_{(A,i)/k,p}$ is slightly different
from that in \cite[\S 2]{AM}, but the resulting representations are the same.
%

We have the following criterion for Conjecture \ref{RTconj}
for QM-abelian surfaces.

\begin{lmm}
\label{irredRT}

Assume that there is a constant $C(B,K)$ depending on $B$ and a number field $K$ such that
$\rhob_{(A,i)/K,p}$ is irreducible for any prime number $p>C(B,K)$ and
any QM-abelian surface $(A,i)$ by $\cO$ over $K$.
Then the set $\sA(K,2)_B$ is finite.

\end{lmm}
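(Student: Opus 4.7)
The plan is to split $\sA(K,2)_B$ into contributions from small and large primes $p$. For any single prime $p$ the set $\sA(K,2,p)_B$ is finite, since it is a subset of $\sA(K,2,p)$ and the latter is known to be finite by the Zarhin/Faltings result recalled in \S\ref{secIntro}. It therefore suffices to show that $\sA(K,2,p)_B=\emptyset$ for all but finitely many primes $p$; enlarging $C(B,K)$ if necessary, I may restrict attention to primes $p>C(B,K)$ that moreover do not divide $d=\disc(B)$, so that the representation $\rhob_{(A,i)/K,p}$ is defined.

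Fix such a prime $p$ and, aiming for a contradiction, suppose $[A]\in\sA(K,2,p)_B$ is represented by a QM-abelian surface $(A,i)$ by $\cO$ over $K$. By the reformulation in Remark 2.1\,(1), the $4$-dimensional $\G_K$-module $A[p](\Kb)$ carries a complete flag of $\G_K$-stable subspaces whose successive quotients are all powers of the mod $p$ cyclotomic character; in particular $\rhob_{A/K,p}$ contains a $1$-dimensional $\G_K$-stable subspace $V_1$.

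The decisive step is to translate this into a statement about $\rhob_{(A,i)/K,p}$ using the QM-structure. The matrix computation in \S\ref{secGalrep} shows that the image of $\rho_{A/K,p}$ in $\GL_4(\Z_p)$ consists of block-diagonal matrices of the form $\left(\begin{smallmatrix}X&0\\0&X\end{smallmatrix}\right)$; reducing modulo $p$ yields an isomorphism of $\F_p[\G_K]$-modules $\rhob_{A/K,p}\cong W\oplus W$, where $W:=\rhob_{(A,i)/K,p}$. Since $V_1\subseteq W\oplus W$ is nonzero and $1$-dimensional, at least one of the two projections $W\oplus W\to W$ restricts to an injection on $V_1$ and maps it onto a $1$-dimensional $\G_K$-stable subspace of $W$. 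Hence $W=\rhob_{(A,i)/K,p}$ is reducible, contradicting the hypothesis. Therefore $\sA(K,2,p)_B=\emptyset$ for all primes $p>C(B,K)$ with $p\nmid d$, which gives the desired finiteness.

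I do not anticipate a serious technical obstacle in this lemma: its content is essentially a repackaging of the Morita-type block decomposition of \S\ref{secGalrep} together with the cyclotomic-filtration reformulation of Remark 2.1\,(1). The genuine difficulty lies in the hypothesis itself, namely the existence of the constant $C(B,K)$, which is the task of the subsequent proof of Theorem \ref{main}.
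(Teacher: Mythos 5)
Your proof is correct and follows essentially the same route as the paper: both reduce the reducibility of the four-dimensional mod~$p$ representation (forced by the filtration condition defining $\sA(K,2,p)$) to reducibility of $\rhob_{(A,i)/K,p}$ via the block decomposition $\rhob_{A/K,p}\cong W\oplus W$ of \S\ref{secGalrep}, then bound $p$ by $C(B,K)$ and invoke finiteness of each $\sA(K,2,p)$. Your projection argument ($V_1\subseteq W\oplus W$ maps onto a $\G_K$-stable line in $W$ under one of the two projections) is just a cleaner packaging of the paper's explicit column computation establishing its claim~(C), and your explicit handling of the finitely many primes dividing $d$ is a harmless refinement.
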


\begin{proof}

Take an element $([A],p)\in\sA(K,2)_B$.
Since $[A]\in\sA(K,2,p)$,
we know that $\rhob_{A/K,p}$ is conjugate to
$\left(\begin{matrix}
*\ &*\ &*\ &* \\ 0\ &*\ &*\ &* \\ 0\ &0\ &*\ &* \\ 0\ &0\ &0\ &* \\
\end{matrix}\right)$.
By the definition of $\sA(K,2)_B$, there is an embedding
$i:\cO\hookrightarrow\End_K(A)$.
Then $(A,i)$ is a QM-abelian surface by $\cO$ over $K$.
We have seen that there is a map
$X:\G_K\longrightarrow\GL_2(\F_p)$
such that
$\rhob_{A/K,p}(\sigma)=
\left(\begin{matrix} X(\sigma)\ &0 \\ 0\ &X(\sigma) \end{matrix}\right)$
for any $\sigma\in\G_K$.
Then there is a matrix
$M=\left(\begin{matrix} M_1\ &M_2 \\ M_3\ &M_4 \end{matrix}\right)\in\GL_4(\F_p)$
(where $M_1,M_2,M_3,M_4$ are $2\times 2$ matrices)
such that
$M^{-1}\left(\begin{matrix} X(\sigma)\ &0 \\ 0\ &X(\sigma) \end{matrix}\right)M
\in\Set{
\left(\begin{matrix}
*\ &*\ &*\ &* \\ 0\ &*\ &*\ &* \\ 0\ &0\ &*\ &* \\ 0\ &0\ &0\ &* \\
\end{matrix}\right)
}$
for any $\sigma\in\G_K$.

We claim the following.
\begin{description}
\item
(C): There is a matrix $H\in\GL_2(\F_p)$ such that
$H^{-1}X(\sigma)H\in
\Set{\left(\begin{matrix} *\ &* \\ 0\ &* \end{matrix}\right)}$
for any $\sigma\in\G_K$.
\end{description}
Let $M_1=(\ba_1\ \ba_2)$, $M_3=(\bc_1\ \bc_2)$ and
$M^{-1}\left(\begin{matrix} X(\sigma)\ &0 \\ 0\ &X(\sigma) \end{matrix}\right)M
=\left(\begin{matrix}
s(\sigma)\ &t(\sigma)\ &*\ &* \\ 0\ &u(\sigma)\ &*\ &* \\ 0\ &0\ &*\ &* \\ 0\ &0\ &0\ &* \\
\end{matrix}\right)$.
Then
$X(\sigma)\ba_1=s(\sigma)\ba_1$,
$X(\sigma)\ba_2=t(\sigma)\ba_1+u(\sigma)\ba_2$,
$X(\sigma)\bc_1=s(\sigma)\bc_1$, and
$X(\sigma)\bc_2=t(\sigma)\bc_1+u(\sigma)\bc_2$
for any $\sigma\in\G_K$.
If $\ba_1\ne \bo$, take a vector $\bb\in\F_p^2$ not contained in
the linear subspace $\F_p\ba_1$ and put $H=(\ba_1\ \bb)$.
Then (C) holds.
If $\ba_1=\bo$ and $\ba_2\ne \bo$, then
$X(\sigma)\ba_2=u(\sigma)\ba_2$,
and so (C) holds.
If $\ba_1=\ba_2=\bo$, then $\bc_1\ne\bo$ or $\bc_2\ne\bo$
because the matrix
$M=\left(\begin{matrix} M_1\ &M_2 \\ M_3\ &M_4 \end{matrix}\right)$
is invertible.
Then (C) follows.

In this case $\rhob_{(A,i)/K,p}$ is reducible, and so $p\leq C(B,K)$.
Therefore $\sharp\sA(K,2)_B<\infty$.

\end{proof}

Theorem \ref{main} is a consequence of the following theorem
(Theorem \ref{galrepirred}) together with Lemma \ref{irredRT}.
Before stating this theorem, we need some preparation.
For a number field $K$,
let $\cM$ be the set of prime numbers $q$ such that $q$ splits
completely in $K$
and $q$ does not divide $6h_K$.
Let $\cN$ be the set of primes $\mfq$ of $K$ such that
$\mfq$ divides some prime number $q\in \cM$.
Take a finite subset $\emptyset\ne \cS\subseteq \cN$ such that
$\cS$ generates the ideal class group of $K$. 
For each prime $\mfq\in \cS$, fix an element $\alpha_{\mfq}\in\cO_K\setminus\{0\}$
satisfying $\mfq^{h_K}=\alpha_{\mfq}\cO_K$.
For a prime number $q$, put
$$\cFR(q):=\Set{\beta\in\C|
\beta^2+a\beta+q=0 \text{ for some integer $a\in\Z$ with $|a|\leq 2\sqrt{q}$}}.$$
For $\mfq\in\cS$,
put $\N(\mfq)=\sharp(\cO_K/\mfq)$.
Then $\N(\mfq)$ is a prime number.
For a finite Galois extension $K$ of $\Q$, define the sets
(\cf \cite{A3}, \cite{AM})

\noindent
$\cM'_1(K):=$
$$\Set{(\mfq,\varepsilon'_0,\beta_{\mfq})|
\mfq\in \cS,\ \varepsilon'_0=\sum_{\sigma\in\Gal(K/\Q)}a'_{\sigma}\sigma
\text{ with $a'_{\sigma}\in\{0,4,6,8,12 \}$},\ 
\beta_{\mfq}\in\cFR(\N(\mfq))}$$

\noindent
(where $\varepsilon'_0$ is an element of the group ring $\Z[\Gal(K/\Q)]$),

\noindent
$\cM'_2(K):=\Set{\Norm_{K(\beta_{\mfq})/\Q}(\alpha_{\mfq}^{\varepsilon'_0}-\beta_{\mfq}^{12h_K})\in\Z|
(\mfq,\varepsilon'_0,\beta_{\mfq})\in\cM'_1(K)}\setminus\{0\}$,

\noindent
$\cN'_0(K):=\Set{\text{$l$ : prime number}|\text{$l$ divides some integer $m\in\cM'_2(K)$}}$,

\noindent
$\cT(K):=\Set{\text{$l'$ : prime number}|\text{$l'$ is divisible
by some prime $\mfq'\in \cS$}}
\cup\{2,3\}$,

\noindent
and

\noindent
$\cN'_1(K):=\cN'_0(K)\cup\cT(K)\cup\Ram(K)$.

\noindent
Note that all the sets, $\cFR(q), \cM'_1(K)$, $\cM'_2(K)$, $\cN'_0(K)$, $\cT(K)$, and
$\cN'_1(K)$, are finite.

\begin{thm}[{\cite[Theorem 6.5]{A3}}]
\label{galrepirred}

Let $K$ be a finite Galois extension of $\Q$ which does not contain
the Hilbert class field of any imaginary quadratic field.
Assume that there is a prime number $q$ which splits completely in $K$ and satisfies
$B\otimes_{\Q}\Q(\sqrt{-q})\not\cong\M_2(\Q(\sqrt{-q}))$.
Let $p>4q$ be a prime number which also satisfies
$p\nmid d$ and $p\not\in \cN'_1(k)$.
Then the representation
$$\rhob_{(A,i)/K,p}:\G_K\longrightarrow\GL_2(\F_p)$$
is irreducible for any QM-abelian surface $(A,i)$ by $\cO$ over $K$.

\end{thm}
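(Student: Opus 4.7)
The plan is to argue by contradiction along the lines of the Mazur--Momose method, in the form already adapted to QM-abelian surfaces in \cite{AM}. Suppose that for some $p>4q$ with $p\nmid d$ and $p\notin\cN'_1(K)$ there is a QM-abelian surface $(A,i)$ over $K$ whose mod-$p$ Galois representation $\rhob_{(A,i)/K,p}$ is reducible. After semisimplification there is a character $\lambda:\G_K\longrightarrow\F_p^{\times}$ as a subquotient; because the $\cO$-equivariant Weil pairing on $T_pA$ forces $\det\rho_{(A,i)/K,p}=\theta_p$, the complementary character is $\theta_p\lambda^{-1}$. The aim is to prove that the existence of such a $\lambda$ places $p$ inside $\cN'_0(K)\cup\cT(K)\cup\Ram(K)=\cN'_1(K)$, contradicting the hypothesis.

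The main work is a local-to-global analysis of $\lambda$. First I would handle primes $v\nmid p$ of $K$: the N\'eron--Ogg--Shafarevich criterion, together with the decomposition $\rho_{A/K,p}\cong\rho_{(A,i)/K,p}^{\oplus 2}$, shows that $\lambda$ is unramified at $v$ except at primes of bad reduction of $A$, which are controlled by divisors of $d$ and by $\cT(K)$. Next, for $v\mid p$ (necessarily unramified over $\Q$ since $p\notin\Ram(K)$), I would apply Raynaud's theorem to the finite flat $\F_p$-group scheme $A[p]$; combined with the $\cO\otimes_\Z\Z_p\cong\M_2(\Z_p)$-action, this forces $\lambda|_{I_v}$ to be a product of fundamental characters of level $\leq 2$, and raising to the twelfth power yields $\lambda^{12}|_{I_v}=\theta_p^{a_v}$ for some $a_v\in\{0,4,6,8,12\}$. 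Class field theory then produces, for each $\mfq\in\cS$, a relation
\[
\lambda^{12 h_K}(\Frob_\mfq)\equiv\alpha_\mfq^{\varepsilon'_0}\pmod{\mathfrak{p}}
\]
where $\varepsilon'_0=\sum_{\sigma\in\Gal(K/\Q)}a'_\sigma\sigma$ with $a'_\sigma\in\{0,4,6,8,12\}$, and $\mathfrak{p}\mid p$ in $K(\beta_\mfq)$. On the other hand, $\lambda(\Frob_\mfq)$ is a root of the mod-$\mathfrak{p}$ reduction of the characteristic polynomial of $\rho_{(A,i)/K,p}(\Frob_\mfq)$, so by the Weil bound (using $\N(\mfq)=q$) there exists $\beta_\mfq\in\cFR(\N(\mfq))$ with $\lambda(\Frob_\mfq)\equiv\beta_\mfq\pmod{\mathfrak{p}}$; the hypothesis $p>4q$ is used here to ensure the lift is consistent. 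Combining gives $\alpha_\mfq^{\varepsilon'_0}\equiv\beta_\mfq^{12 h_K}\pmod{\mathfrak{p}}$.

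The endgame then splits into two cases. If $\alpha_\mfq^{\varepsilon'_0}\neq\beta_\mfq^{12 h_K}$ in $K(\beta_\mfq)$, then $p$ divides the nonzero integer $\Norm_{K(\beta_\mfq)/\Q}(\alpha_\mfq^{\varepsilon'_0}-\beta_\mfq^{12 h_K})\in\cM'_2(K)$, placing $p\in\cN'_0(K)\subseteq\cN'_1(K)$ and producing the contradiction. The hard part --- and the main obstacle to the whole argument --- is the degenerate case $\alpha_\mfq^{\varepsilon'_0}=\beta_\mfq^{12 h_K}$, which can occur simultaneously for all $\mfq\in\cS$. Since $\beta_\mfq$ satisfies $X^2+aX+q=0$, such an equality is an algebraic identity in $\Q(\sqrt{-q})$; propagated across $\cS$ (which generates the class group of $K$) it would force $(A,i)$ to acquire complex multiplication by an order in $\Q(\sqrt{-q})$ over $K$. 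This is precisely what the two standing hypotheses exclude: $B\otimes_\Q\Q(\sqrt{-q})\not\cong\M_2(\Q(\sqrt{-q}))$ prevents $\Q(\sqrt{-q})$ from embedding into $B$, so no $\cO$-compatible CM structure can exist on $(A,i)$, while $K$ not containing the Hilbert class field of $\Q(\sqrt{-q})$ forbids any such CM abelian surface from being defined over $K$. Either obstruction rules out the degenerate case, completing the contradiction.
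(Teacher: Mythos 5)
The paper itself gives no proof of this statement: it is quoted verbatim from \cite[Theorem 6.5]{A3}, whose argument is the Mazur--Momose isogeny-character method that you are reconstructing. Your skeleton is the right one (reducibility yields a character $\lambda$ with complement $\theta_p\lambda^{-1}$; Raynaud at $v\mid p$ gives $\lambda^{12}|_{I_v}=\theta_p^{a_v}$ with $a_v\in\{0,4,6,8,12\}$; class field theory applied to $\mfq^{h_K}=\alpha_\mfq\cO_K$ gives $\alpha_\mfq^{\varepsilon'_0}\equiv\beta_\mfq^{12h_K}\pmod{\mathfrak p}$ for $\mfq\in\cS$; and if the difference is nonzero then $p$ divides an element of $\cM'_2(K)$, so $p\in\cN'_0(K)$). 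But two of your intermediate claims are wrong as stated: away from $p$ the correct assertion is that $\lambda^{12}$ is unramified, because a QM-abelian surface has potentially good reduction everywhere and the inertia image is finite of exponent dividing $12$ --- the theorem assumes nothing about the reduction of $A$, so bad primes are \emph{not} ``controlled by divisors of $d$ and by $\cT(K)$'' and $\lambda$ itself need not be unramified there; and the congruence $\lambda(\Frob_\mfq)\equiv\beta_\mfq\pmod{\mathfrak p}$ requires no hypothesis $p>4q$, nor is $\N(\mfq)$ the auxiliary prime $q$ --- the primes of $\cS$ are chosen to generate the class group and are unrelated to $q$.

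The genuine gap is the degenerate case, which is the heart of the theorem and which you dismiss in a few unsupported sentences. When $\alpha_\mfq^{\varepsilon'_0}=\beta_\mfq^{12h_K}$ for all $\mfq\in\cS$, the actual proof classifies the possible types $\varepsilon'_0$ (using archimedean absolute values and the Galois action on these equalities). In the ``CM type'' case one shows that the character data force $K$ to contain the Hilbert class field of an imaginary quadratic field $F$ determined by $\lambda$ and the $\beta_\mfq$ (not by the auxiliary $q$), contradicting the hypothesis on $K$; in the remaining (trivial/full/middle) types a suitable twist of $\lambda^{12}$ is unramified everywhere, and one then reduces $A$ at a degree-one prime of $K$ above the auxiliary prime $q$: the resulting congruence on the Frobenius trace, the Weil bound, and $p>4q$ force an exact identity (e.g.\ trace $0$, so $\Frob^2=-q$), whence $\Q(\sqrt{-q})$ embeds into, i.e.\ splits, $B$ --- contradicting $B\otimes_{\Q}\Q(\sqrt{-q})\not\cong\M_2(\Q(\sqrt{-q}))$. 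This is where $p>4q$ and the auxiliary prime are genuinely used. Your alternative ending does not work: the degenerate equalities involve only the primes in $\cS$, so they do not ``force CM by an order in $\Q(\sqrt{-q})$''; and the assertion that $K$ not containing the Hilbert class field of $\Q(\sqrt{-q})$ ``forbids any such CM abelian surface from being defined over $K$'' is false in general --- CM objects can be defined over fields not containing the Hilbert class field; what the real argument shows is that the isogeny character itself forces $H_F\subseteq K$. Without the type classification, the reduction-at-$q$ step, and the splitting criterion for quadratic fields in $B$, no contradiction is reached.
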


\section{Points on Shimura curves}
\label{secMB}

Let $M^B$ be the coarse moduli scheme over $\Q$, parameterizing isomorphism classes
of QM-abelian surfaces by $\cO$.
Then $M^B$ is a proper smooth curve over $\Q$, called a Shimura curve.
The notation $M^B$ is permissible,
although we should write $M^{\cO}$ instead of $M^B$ because,
even if we replace $\cO$ by another maximal order $\cO'$,
we have a natural isomorphism
$M^{\cO}\cong M^{\cO'}$
since $\cO$ and $\cO'$ are conjugate in $B$.
We discuss points on $M^B$, and the consequences of this section
will be used to provide examples of Theorem \ref{main} (see Proposition \ref{exRT} in \S \ref{secEx}).
For real points on $M^B$, we know the following.

\begin{thm}[{\cite[Theorem 0, p.136]{Sh}}]
\label{M^B(R)}

We have $M^B(\R)=\emptyset$.

\end{thm}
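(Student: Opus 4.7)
The plan is to argue by contradiction. Suppose $M^B(\R)\neq\emptyset$. My first step is to show that any $\R$-point of $M^B$ is represented by a QM-abelian surface $(A,i)$ by $\cO$ defined over $\R$. Because $M^B$ is a coarse moduli scheme, an $\R$-point corresponds to a $\Gal(\C/\R)$-stable $\C$-isomorphism class of QM-abelian surfaces; since the automorphism group of such a surface is essentially $\{\pm 1\}$ (coming from the center of $B^\times$), the descent obstruction is absorbed by passing to an appropriate quadratic twist, itself a QM-abelian surface over $\R$. I then pass to the complex-analytic uniformization $A(\C)=V/L$ with $V=\mathrm{Lie}\,A(\C)$ a two-dimensional $\C$-vector space and $L=H_1(A(\C),\Z)$ a rank-four lattice on which $\cO$ acts faithfully.

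The heart of the plan is to extract from the $\R$-structure an element of $B^{\mathrm{op}}$ that is incompatible with $B$ being a division algebra. Complex conjugation on $\Spec\C$ induces a group-homomorphism involution $c\colon A(\C)\to A(\C)$, acting $\Z$-linearly on $L$ with $c^2=1$. Since each $i(\alpha)$, $\alpha\in\cO$, is defined over $\R$, its complexification commutes with $c$, so $c$ commutes with the $\cO$-action on $L$. Because $L\otimes_\Z\Q$ is a free left $B$-module of rank one, the centralizer of the $B$-action is canonically $B^{\mathrm{op}}$, and thus $c$ corresponds to an element of $B^{\mathrm{op}}$ satisfying $c^2=1$.

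I would then rule out $c=\pm 1$. If $c=1$, complex conjugation would act trivially on $A(\C)$, impossible since $A(\R)$ is a real Lie group of dimension $2$ while $A(\C)$ is a real $4$-manifold. If $c=-1$, observe that the derivative of $c$ on $V$ anti-commutes with the complex structure $J$ (since $c$ is antiholomorphic); then $c|_V=-\id_V$ would give $-J=c|_V\circ J=-J\circ c|_V=J$, forcing $J=0$ and contradicting $J^2=-\id_V$. Hence $c\in B^{\mathrm{op}}$ satisfies $c^2=1$ with $c\neq\pm 1$, and the identity $(c-1)(c+1)=0$ produces non-trivial zero divisors in $B^{\mathrm{op}}$ (hence in $B$), contradicting the hypothesis that $B$ is a division algebra.

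The step I expect to require the most care is the descent at the outset, certifying that an $\R$-point of the coarse moduli $M^B$ genuinely produces a QM-abelian surface over $\R$ after a suitable quadratic twist. Shimura's analysis of the automorphism functor in \cite{Sh} handles this cleanly; once $(A,i)/\R$ is in hand, the remaining algebraic contradiction above is short and uses nothing beyond the division-algebra structure of $B$.
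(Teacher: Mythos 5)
The heart of your argument (a QM-abelian surface over $\R$ cannot exist, because complex conjugation would act on $H_1(A(\C),\Q)$ as an element $c$ of the division algebra $B^{\mathrm{op}}$ with $c^2=1$, while antiholomorphy rules out $c=\pm 1$, so $(c-1)(c+1)=0$ yields zero divisors) is correct and is a genuinely different route from the paper, which does not prove the statement at all but quotes Shimura's original argument on the antiholomorphic involution of $\Gamma\backslash\mathcal{H}$ \cite{Sh}. The genuine gap is in your very first step, the descent. The obstruction to representing a point of the coarse moduli space $M^B(\R)$ by an actual QM-abelian surface over $\R$ lives in $H^2(\Gal(\C/\R),\{\pm 1\})=\mathrm{Br}(\R)[2]\neq 0$; quadratic twists parametrize classes in $H^1$ and act on the set of models once one exists, but they cannot ``absorb'' an $H^2$-obstruction. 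Indeed, if your twisting argument were valid it would show that every $k$-rational point of $M^B$ is representable over $k$, contradicting the ``only if'' direction of Theorem \ref{fieldofdefk}: the obstruction is precisely the class of $B\otimes_{\Q}k$ in $\mathrm{Br}(k)$, and it is genuinely nontrivial for suitable $k$. (A secondary inaccuracy: at CM points the automorphism group is $\Z/4\Z$ or $\Z/6\Z$, not $\{\pm 1\}$, so even the setup of your descent needs care; and appealing to ``Shimura's analysis in \cite{Sh}'' for this step is circular, since \cite{Sh} is the source of the theorem you are trying to reprove.)

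The fix is to replace the hand-waved descent by the paper's Theorem \ref{fieldofdefk} (Jordan \cite{J}): since $B$ is indefinite, $B\otimes_{\Q}\R\cong\M_2(\R)$, so any point of $M^B(\R)$ would be represented by a QM-abelian surface $(A,i)$ over $\R$, and Jordan's proof of that criterion does not rely on Shimura's real-points theorem. With that substitution, the remainder of your argument is sound: $c$ lifts to an $\R$-linear automorphism of $V=L\otimes_{\Z}\R$ extending its action on $L$, anticommutes with the complex structure $J$, hence cannot be $\pm\mathrm{id}$, while commutation with the left $B$-action on the rank-one free module $L\otimes_{\Z}\Q$ places $c$ in $B^{\mathrm{op}}$ with $c^2=1$, forcing $c=\pm 1$ in a division algebra --- a contradiction. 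What your approach buys, once repaired, is a purely moduli-theoretic proof of $M^B(\R)=\emptyset$ from Theorem \ref{fieldofdefk} plus indefiniteness of $B$, in contrast with the paper's reliance on Shimura's analytic argument; the cost is that it rests on the nontrivial field-of-definition theorem rather than being self-contained.
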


The genus of the Shimura curve $M^B$ is $0$ if and only if
$d\in\{6,10,22\}$ (\cite[Lemma 3.1, p.168]{A1}).
The defining equations of such curves are 

\begin{equation}
\label{eqMB}
\begin{cases}
d=6\ :\ x^2+y^2+3=0,\\
d=10\ :\ x^2+y^2+2=0,\\
d=22\ :\ x^2+y^2+11=0
\end{cases}
\end{equation}

\noindent
(see \cite[Theorem 1-1, p.279]{Kur}).
In these cases, for a field $k$ of characteristic $0$, the condition $M^B(k)\ne\emptyset$
implies
that the base change $M^B\otimes_{\Q}k$ is isomorphic to
the projective line $\pP^1_k$,
and so
$\sharp M^B(k)=\infty$.

\begin{thm}[{\cite[Theorem (1.1), p.93]{J}}]
\label{fieldofdefk}

Let $k$ be a field of characteristic $0$.
A point of $M^B(k)$ can be represented by a QM-abelian surface by $\cO$
over $k$ if and only if $B\otimes_{\Q}k\cong\M_2(k)$.

\end{thm}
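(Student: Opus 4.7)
The plan is to prove both directions of the biconditional, with the reverse direction forming the main content. The forward direction follows from a short dimension count on the Lie algebra; the reverse direction is a Galois-descent argument whose obstruction class is identified with the Brauer class $[B\otimes_\Q k]$.

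\textbf{Forward direction.} Given a QM-abelian surface $(A,i)$ over $k$ representing the point, the Lie algebra $\mathrm{Lie}(A)$ is a $2$-dimensional $k$-vector space equipped, via $i$, with an $\cO$-module structure. Extending scalars gives a $k$-algebra homomorphism $B\otimes_\Q k\to\End_k(\mathrm{Lie}(A))\cong\M_2(k)$. Since $B\otimes_\Q k$ is central simple over $k$ and the map is nonzero, it is injective; comparing dimensions (both equal $4$) forces it to be an isomorphism, whence $B\otimes_\Q k\cong\M_2(k)$.

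\textbf{Reverse direction.} Fix $P\in M^B(k)$, and choose a QM-abelian surface $(A_0,i_0)$ over $\kb$ that represents $P$. For each $\sigma\in\G_k$, the conjugate $(A_0^\sigma,i_0^\sigma)$ also represents $P$, so I can pick a QM-compatible $\kb$-isomorphism $f_\sigma\colon A_0^\sigma\xrightarrow{\sim}A_0$, well-defined up to an element of $\Aut(A_0,i_0)$; generically $\Aut(A_0,i_0)=\{\pm 1\}$, with possibly $\pmu_4$ or $\pmu_6$ at finitely many CM points. The $2$-cocycle $c(\sigma,\tau):=f_\sigma\circ f_\tau^\sigma\circ f_{\sigma\tau}^{-1}\in\Aut(A_0,i_0)$ defines a class $[c]\in H^2(\G_k,\pmu_2)=\mathrm{Br}(k)[2]$, which is the standard obstruction to modifying the $f_\sigma$ into honest Galois descent data and thereby descending $(A_0,i_0)$ to a QM-abelian surface over $k$.

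The crux is to identify $[c]$ with $[B\otimes_\Q k]$. The $B$-action on $V:=\mathrm{Lie}(A_0)$ yields an isomorphism $B\otimes_\Q\kb\xrightarrow{\sim}\End_{\kb}(V)\cong\M_2(\kb)$. The maps $\mathrm{Lie}(f_\sigma)$ furnish $\sigma$-semilinear automorphisms of $V$, well-defined up to sign; although the semilinear maps themselves are only defined up to scalar, conjugation by them gives a well-defined twisted Galois action on $\End_{\kb}(V)$, producing a central simple $k$-algebra of degree $2$ with Brauer class $[c]$. The QM-compatibility $f_\sigma\circ i_0^\sigma(\alpha)=i_0(\alpha)\circ f_\sigma$ shows that the isomorphism $B\otimes_\Q\kb\cong\End_{\kb}(V)$ intertwines the natural Galois action on $B\otimes_\Q\kb$ with this twisted action on $\End_{\kb}(V)$. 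Taking $\G_k$-invariants on both sides gives $B\otimes_\Q k$ on one side and the twisted form with class $[c]$ on the other, so $[B\otimes_\Q k]=[c]$ in $\mathrm{Br}(k)$. Since $B\otimes_\Q k$ is a quaternion $k$-algebra, $[c]=0$ if and only if $B\otimes_\Q k\cong\M_2(k)$, completing the proof.

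\textbf{Main obstacle.} The principal technical step is the explicit verification that the QM-compatibility of $f_\sigma$ does translate into the intertwining described above; this is a careful semilinear-algebra computation comparing the natural and twisted Galois actions on a pair of isomorphic central simple algebras. A secondary point is the handling of CM fibers where $\Aut(A_0,i_0)$ strictly contains $\{\pm 1\}$, for which I expect the $\pmu_2$-component of the cocycle to still capture the Brauer obstruction while the remaining automorphism contributions are killed upon passing to $H^2$.
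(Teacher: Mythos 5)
This statement is not proved in the paper at all: it is quoted from Jordan [J, Theorem (1.1)], so there is no internal proof to compare with, only Jordan's. That said, most of your sketch is sound. The forward direction via $\mathrm{Lie}(A)$ is the standard argument and is correct. In the reverse direction, the step you flag as the main obstacle is actually fine: the QM-compatibility $f_\sigma\circ i_0^\sigma(\alpha)=i_0(\alpha)\circ f_\sigma$ says exactly that conjugation by the $\sigma$-semilinear maps $\mathrm{Lie}(f_\sigma)$ fixes $\mathrm{Lie}(i_0(\alpha))$ for $\alpha\in\cO$, and since both the natural semilinear action on $B\otimes_{\Q}\kb$ and your twisted action on $\End_{\kb}(\mathrm{Lie}(A_0))$ are $\sigma$-semilinear ring homomorphisms agreeing on the generators $B\otimes 1$, the isomorphism intertwines them; taking invariants gives $[B\otimes_{\Q}k]=[c]$ in $\mathrm{Br}(k)$, and vanishing of $[c]$ together with effectivity of Galois descent for projective varieties (after passing to a finite extension to ensure continuity) does descend $(A_0,i_0)$ to a $k$-form representing $P$.

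The genuine gap is the case you defer with ``I expect'': the elliptic points, i.e.\ the finitely many points with $\Aut(A_0,i_0)\cong\pmu_4$ or $\pmu_6$. There the obstruction class lives in $H^2(\G_k,\Aut(A_0,i_0))$, where the coefficient group is larger and the Galois action on it (via $u\mapsto f_\sigma u^\sigma f_\sigma^{-1}$) need not be trivial, and the ``if'' direction requires the vanishing of this full class. There is no ``$\pmu_2$-component'' to isolate: $\pmu_4$ does not split as $\pmu_2\times\pmu_2$, vanishing of the image of the class under $\pmu_4\to\pmu_4/\pmu_2$ (or of any associated $\pmu_2$-valued class) does not imply vanishing of the class itself, and one cannot in general choose the $f_\sigma$ so that the cocycle takes values in $\{\pm1\}$. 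Since the theorem asserts representability of \emph{every} point of $M^B(k)$ when $B\otimes_{\Q}k\cong\M_2(k)$, including any $k$-rational elliptic points, these points need a separate argument — Jordan treats the CM points by a different route, through Shimura's theory of fields of moduli and fields of definition for abelian varieties with additional structure. As written, your argument proves the theorem only for points whose representing surface has $\Aut(A_0,i_0)=\{\pm1\}$.
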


\begin{rem}
\label{reminfty(A,i)}

For a field $k$ of characteristic $0$, note that if
$\sharp M^B(k)=\infty$ and $B\otimes_{\Q}k\cong\M_2(k)$,
then there are infinitely many $\kb$-isomorphism classes
of QM-abelian surfaces $(A,i)$ by $\cO$ over $k$.

\end{rem}


Next we quote a recent result
concerning
algebraic points on Shimura curves of $\Gamma_0(p)$-type,
which is related to Theorem \ref{galrepirred}
(but there is no implication from or to that theorem).
For a prime number $p$ not dividing $d$,
let $M_0^B(p)$
be the coarse moduli scheme over $\Q$ parameterizing isomorphism classes
of triples $(A,i,V)$ where $(A,i)$ is a QM-abelian surface by $\cO$
and $V$ is a left $\cO$-submodule of $A[p]$ with $\F_p$-dimension $2$.
Then $M_0^B(p)$ is a proper smooth curve over $\Q$, which we call a
Shimura curve of $\Gamma_0(p)$-type.
We have a natural map $M_0^B(p)\longrightarrow M^B$
over $\Q$ defined by $(A,i,V)\longmapsto (A,i)$.
So, Theorem \ref{M^B(R)}  implies $M_0^B(p)(\R)=\emptyset$ for any $p$.
%
%
%
For a finite Galois extension $K$ of $\Q$, define the finite sets

\noindent
$\cM_1(K):=$
$$\Set{(\mfq,\varepsilon_0,\beta_{\mfq})|
\mfq\in \cS,\ \varepsilon_0=\sum_{\sigma\in\Gal(K/\Q)}a_{\sigma}\sigma
\text{ with $a_{\sigma}\in\{0,8,12,16,24 \}$},\ 
\beta_{\mfq}\in\cFR(\N(\mfq))}$$

\noindent
(where $\varepsilon_0$ is an element of the group ring $\Z[\Gal(K/\Q)]$),

\noindent
$\cM_2(K):=\Set{\Norm_{K(\beta_{\mfq})/\Q}(\alpha_{\mfq}^{\varepsilon_0}-\beta_{\mfq}^{24h_K})\in\Z|
(\mfq,\varepsilon_0,\beta_{\mfq})\in\cM_1(K)}\setminus\{0\}$,

\noindent
$\cN_0(K):=\Set{\text{$l$ : prime number}|\text{$l$ divides some integer $m\in\cM_2(K)$}}$,


\noindent
and

\noindent
$\cN_1(K):=\cN_0(K)\cup\cT(K)\cup\Ram(K)$.

\noindent
The following theorem is proved by a method similar to
the proof of Theorem \ref{galrepirred} (\cf [Mo4]).

\begin{thm}[{\cite[Theorem 1.4]{A3}}]
\label{M0BpK}

Let $K$ be a finite Galois extension of $\Q$ which does not contain
the Hilbert class field of any imaginary quadratic field.
Assume that there is a prime number $q$ which splits completely in $K$ and satisfies
$B\otimes_{\Q}\Q(\sqrt{-q})\not\cong\M_2(\Q(\sqrt{-q}))$.
Let $p>4q$ be a prime number which also satisfies
$p\geq 11$, $p\ne 13$, $p\nmid d$ and $p\not\in \cN_1(K)\cup\cN'_1(K)$.

(1)
If $B\otimes_{\Q}K\cong\M_2(K)$, then $M_0^B(p)(K)=\emptyset$.

(2)
If $B\otimes_{\Q}K\not\cong\M_2(K)$, then
$M_0^B(p)(K)\subseteq\{\text{elliptic points of order $2$ or $3$}\}$.

\end{thm}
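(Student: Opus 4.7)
The plan is to convert a hypothetical $K$-rational point of $M_0^B(p)$ into a reducible mod $p$ Galois representation attached to a QM-abelian surface, and then to contradict the arithmetic hypotheses on $p$ by the same character-theoretic machinery that underlies Theorem \ref{galrepirred}. Let $P \in M_0^B(p)(K)$. In case (1), Theorem \ref{fieldofdefk} guarantees that the image of $P$ in $M^B(K)$ is represented by a QM-abelian surface $(A,i)$ defined over $K$, and the $\Gamma_0(p)$-datum furnishes a $\cO$-stable, $\G_K$-stable $\F_p$-subspace $V \subset A[p]$ of dimension $2$. In case (2), $B \otimes_{\Q} K \not\cong \M_2(K)$ forces $(A,i)$ to be defined only over a quadratic extension $K'/K$; provided $P$ is not an elliptic point of order $2$ or $3$, the automorphism group of $(A,i,V)$ is just $\{\pm 1\}$, so the descent obstruction is controlled by a quadratic twist and $\Gal(K'/K)$ acts compatibly on the triple.

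In either situation the triple $(A,i,V)$ yields a reducible representation whose semisimplification is $\chi_1 \oplus \chi_2$ with $\chi_1 \chi_2 = \theta_p$, the mod $p$ cyclotomic character. The crystalline structure of $T_pA$ at primes $\mfp \mid p$ — good reduction together with the quaternion action — forces each $\chi_i$ restricted to inertia to be a power of a fundamental character with bounded exponent; raising to the $24 h_K$-th power and applying class field theory at the principal ideals $\mfq^{h_K} = \alpha_\mfq \cO_K$ for $\mfq \in \cS$, one obtains
\[
\chi_1^{24 h_K}(\Frob_\mfq) \equiv \alpha_\mfq^{\varepsilon_0} \pmod{\mfP}
\]
for some $\varepsilon_0 = \sum_\sigma a_\sigma \sigma \in \Z[\Gal(K/\Q)]$ with $a_\sigma \in \{0,8,12,16,24\}$ and some prime $\mfP \mid p$ in a suitable extension. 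The coefficient set is doubled relative to the $\{0,4,6,8,12\}$ of Theorem \ref{galrepirred} because the $\Gamma_0(p)$-structure selects a line inside a two-dimensional summand of $\rhob_{A/K,p}$ rather than directly inside $\rhob_{(A,i)/K,p}$.

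On the other hand, the Eichler-Shimura relation identifies $\chi_1(\Frob_\mfq) \bmod \mfP$ with a Frobenius eigenvalue $\beta_\mfq \in \cFR(\N(\mfq))$ of $A \bmod \mfq$. Combining the two congruences, $p$ divides
\[
\Norm_{K(\beta_\mfq)/\Q}\bigl(\alpha_\mfq^{\varepsilon_0} - \beta_\mfq^{24 h_K}\bigr).
\]
If this integer is nonzero, it lies in $\cM_2(K)$, so $p \in \cN_0(K) \subseteq \cN_1(K)$, contradicting the hypothesis $p \notin \cN_1(K) \cup \cN'_1(K)$. If it vanishes, $(A,i)$ must have CM by an imaginary quadratic field $F$ embedded in $B$; the combined assumptions that $K$ contains no Hilbert class field of an imaginary quadratic field, that $q$ splits completely in $K$ with $B \otimes_{\Q} \Q(\sqrt{-q}) \not\cong \M_2(\Q(\sqrt{-q}))$, and $p > 4q$, exclude this CM possibility by the same argument as in Theorem \ref{galrepirred}.

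The main obstacle will be the careful descent in case (2): one must verify that the non-elliptic point $P$, with field of moduli $K$ but field of definition a quadratic extension $K'/K$, yields enough $\G_K$-equivariant data to run the congruence argument — specifically, that $\chi_1^{24h_K}$ is insensitive to the $\pm 1$-twist ambiguity introduced by the descent (this uses $p \ne 2$), and that the Frobenius congruences at $\mfq \in \cS$ propagate from $K'$ back to $K$ because each such $\mfq$ splits completely in $K'/K$. The exclusion of elliptic points of orders $2$ and $3$, together with the conditions $p \geq 11$ and $p \ne 13$, ensures that no exceptional automorphisms interfere with the descent or with the mod $p$ character analysis; once these are in place, the remainder of the argument is the machinery of Theorem \ref{galrepirred} applied with $\cM_1(K), \cN_1(K)$ in place of $\cM'_1(K), \cN'_1(K)$.
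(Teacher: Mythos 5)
You should first note a structural point: this paper does not prove Theorem \ref{M0BpK} at all. The statement is imported verbatim from \cite[Theorem 1.4]{A3}, and the only remark made here is that it ``is proved by a method similar to the proof of Theorem \ref{galrepirred}'' (\cf [Mo4]). So there is no internal proof to compare you against line by line; what can be said is that your outline does follow the indicated Momose-style isogeny-character method (attach a character to the $\Gamma_0(p)$-datum, compare its values at Frobenius elements of primes $\mfq\in\cS$ with $\alpha_{\mfq}^{\varepsilon_0}$ via the analysis at $p$ and class field theory, conclude $p\in\cN_0(K)$ unless a norm vanishes, and kill the degenerate/CM case with the hypotheses on $q$ and on Hilbert class fields), which is the right strategy.

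Judged as a proof, however, your text asserts exactly the points where the argument has content. (i) The passage from a $K$-rational point of the \emph{coarse} moduli scheme $M_0^B(p)$ to Galois data over $K$ is not done: Theorem \ref{fieldofdefk} concerns $M^B$, not $M_0^B(p)$, and even in case (1) you must argue that the triple $(A,i,V)$ --- not just $(A,i)$ --- can be taken with $V$ a $\G_K$-stable $\cO$-submodule, or else work throughout with a character that is only well defined up to quadratic twist; in case (2) your plan to restrict to a quadratic extension $K'$ and ``propagate the Frobenius congruences back to $K$ because each $\mfq\in\cS$ splits completely in $K'/K$'' rests on an unjustified (and in general false) splitting claim; the standard remedy is to work with a twist-invariant character of $\G_K$ itself, not to descend from $\G_{K'}$. (ii) Your explanation of the exponent set $\{0,8,12,16,24\}$ and the power $24h_K$ (``because the $\Gamma_0(p)$-structure selects a line inside a two-dimensional summand of $\rhob_{A/K,p}$ rather than inside $\rhob_{(A,i)/K,p}$'') is not correct --- those two descriptions give the same line; the doubling relative to $\{0,4,6,8,12\}$ and $12h_K$ in Theorem \ref{galrepirred} comes from replacing the character by its square to remove the $\pm1$/field-of-moduli ambiguity, which you only acknowledge in passing at the end. (iii) The hypotheses $p\ge 11$, $p\ne 13$ are waved at (``no exceptional automorphisms interfere'') without identifying where they are actually needed, and the vanishing-norm (CM) case is deferred wholesale to ``the same argument as in Theorem \ref{galrepirred}''. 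Since these are precisely the delicate steps carried out in \cite{A3}, the proposal is a reasonable plan in the spirit the paper indicates, but not a proof.
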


Here an elliptic point of order $2$ (\resp $3$) is a point
whose corresponding triple $(A,i,V)$ (over $\Kb$) satisfies
$\Aut_{\cO}(A,V)\cong\Z/4\Z$ (\resp $\Z/6\Z$),
where $\Aut_{\cO}(A,V)$ is the group of automorphisms of $A$
defined over $\Kb$ commuting with the action of $\cO$
and stabilizing $V$.

\section{Examples}
\label{secEx}

We give several explicit examples of Theorem \ref{main} in the following proposition.

\begin{prp}
\label{exRT}

Let $d\in\{6,10,22\}$ and $K\in\{\Q(\sqrt{3},\sqrt{-5}), \Q(\zeta_5), \Q(\zeta_{17})\}$.
Assume $(d,K)\ne(22,\Q(\zeta_5))$.
Then there are infinitely many $\Kb$-isomorphism classes of QM-abelian surfaces
$(A,i)$ by $\cO$ over $K$, and the set $\sA(K,2)_B$ is finite.

\end{prp}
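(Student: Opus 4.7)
The proposition splits into two independent claims: (i) the finiteness of $\sA(K,2)_B$, and (ii) the existence of infinitely many $\Kb$-isomorphism classes of QM-abelian surfaces by $\cO$ over $K$. The plan is to treat them separately.

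For (i), the strategy is to invoke Theorem \ref{main} directly. Each of the three fields $K$ is finite Galois over $\Q$ (the cyclotomic fields are abelian; $\Q(\sqrt 3,\sqrt{-5})$ is biquadratic), so only two hypotheses need checking: (a) that $K$ contains no Hilbert class field of any imaginary quadratic field, and (b) that there is a prime $q$ splitting completely in $K$ with $B\otimes_\Q \Q(\sqrt{-q})$ a division algebra. For (a), I would enumerate imaginary quadratic subfields: $\Q(\zeta_5)$ and $\Q(\zeta_{17})$ each admit only the real quadratic subfield $\Q(\sqrt 5)$ (resp.\ $\Q(\sqrt{17})$), hence no imaginary quadratic subfield at all; the biquadratic $\Q(\sqrt 3,\sqrt{-5})$ contains $\Q(\sqrt{-5})$ and $\Q(\sqrt{-15})$ (both of class number $2$), whose Hilbert class fields are the genus fields $\Q(i,\sqrt 5)$ and $\Q(\sqrt{-3},\sqrt 5)$, and neither lies in $K$ since $i,\sqrt{-3}\notin K$. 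For (b), both conditions are congruences on $q$ (splitting completely is determined by the conductor of $K$; the division condition is $\left(\tfrac{-q}{\ell}\right)=1$ for some $\ell\mid d$), so a small explicit $q$ is produced by Dirichlet.

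For (ii), by Remark \ref{reminfty(A,i)} it suffices to prove $B\otimes_\Q K \cong \M_2(K)$ and $\sharp M^B(K)=\infty$. The first is a local splitting check: $B\otimes_\Q K_v$ is already split when $v\nmid d$, and for $v\mid\ell$ with $\ell\mid d$ it is split iff $[K_v:\Q_\ell]$ is even. An explicit decomposition computation in each of the remaining five pairs confirms the parity condition (for instance $11$ is inert of residue degree $16$ in $\Q(\zeta_{17})$, and in $\Q(\sqrt 3,\sqrt{-5})$ both $2$ and $11$ have local degree $2$), while the excluded pair $(22,\Q(\zeta_5))$ fails precisely because $11\equiv 1\pmod 5$ makes $11$ split completely in $\Q(\zeta_5)$, yielding local degree $1$ at a place above $11$. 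For the second, since $d\in\{6,10,22\}$ the Shimura curve $M^B$ is the genus-zero conic of (\ref{eqMB}), so exhibiting one $K$-rational point identifies $M^B\otimes_\Q K$ with $\pP^1_K$, hence infinitely many $K$-points. For $K=\Q(\sqrt 3,\sqrt{-5})$ solutions are immediate by inspection: $(\sqrt{-15},2\sqrt 3)$, $(\sqrt 3,\sqrt{-5})$, $(2,\sqrt{-15})$ solve $x^2+y^2+n=0$ for $n=3,2,11$ respectively.

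The main obstacle is producing a $K$-point on $x^2+y^2+n=0$ in the cyclotomic cases, where $\sqrt{-n}$ does not lie in $K$ in general. I would circumvent the direct search via Hasse--Minkowski: the conic is the Severi--Brauer variety of the quaternion algebra $(-1,-n)$ over $\Q$, so a $K$-point exists iff $(-1,-n)\otimes_\Q K$ is split. Since $K$ is totally imaginary no real obstruction arises, and the remaining check at the finite primes ramified in $(-1,-n)$ (namely $\{3\}$, $\{2\}$, $\{11\}$ for $n=3,2,11$) reduces to the same even-local-degree criterion applied above; this is immediate in each of the remaining cases and is exactly what fails for $(d,K)=(22,\Q(\zeta_5))$. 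Once existence is known, a concrete solution can be constructed by parametrizing over the maximal real subfield $\Q(\sqrt 5)$ or $\Q(\sqrt{17})$; for example $\bigl(\phi+(\zeta_5-\zeta_5^4),\,-\phi+(\zeta_5-\zeta_5^4)\bigr)$ with $\phi=(1+\sqrt 5)/2$ solves $x^2+y^2+2=0$ over $\Q(\zeta_5)$, and the remaining cyclotomic cases are handled analogously.
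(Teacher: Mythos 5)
Your proposal is correct and follows the same overall skeleton as the paper: finiteness via the irreducibility criterion (you cite Theorem \ref{main}, the paper applies its ingredients, Lemma \ref{irredRT} plus Theorem \ref{galrepirred} -- equivalent), and the infinitude of QM-abelian surfaces via Remark \ref{reminfty(A,i)} once $B\otimes_{\Q}K\cong\M_2(K)$ and $\sharp M^B(K)=\infty$ are known. The differences are in the details. For the Hilbert-class-field condition you identify the genus fields $\Q(i,\sqrt{5})$ and $\Q(\sqrt{-3},\sqrt{5})$ explicitly and check non-containment, whereas the paper argues that $K/\Q(\sqrt{-5})$ and $K/\Q(\sqrt{-15})$ are ramified; both work. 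For $M^B(K)\neq\emptyset$ you phrase the conic (\ref{eqMB}) as the Severi--Brauer variety of $(-1,-n)$ and apply Hasse--Minkowski, while the paper runs the Hasse principle together with the facts that $M^B(\Q_p)=\emptyset$ only for $p=3,2,11$ and that any quadratic extension of $\Q_p$ has a point; these are the same even-local-degree computation at the places over $3$, $2$, $11$ (and your identification of the split/failure pattern, including the failure at $11$ for $(22,\Q(\zeta_5))$, matches the paper's table of $(e_p,f_p,g_p)$). Your explicit points, e.g.\ $(\sqrt{-15},2\sqrt{3})$, $(\sqrt{3},\sqrt{-5})$, $(2,\sqrt{-15})$ over $\Q(\sqrt{3},\sqrt{-5})$ and the point over $\Q(\zeta_5)$ with $\phi=(1+\sqrt5)/2$, check out and are a nice addition the paper does not give. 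The one thin spot is the existence of the prime $q$: you assert it "by Dirichlet," but the splitting condition (mod $60$, $5$, or $17$) and the non-splitting condition for $B\otimes_{\Q}\Q(\sqrt{-q})$ (mod $24$, $40$, or $88$) have moduli with common factors (e.g.\ $\gcd(60,24)=12$, $\gcd(5,40)=5$), so simultaneous solvability is not automatic and must be verified case by case; this is exactly what the paper's Lemmas \ref{exqsplit} and \ref{exB-qM2} make possible by listing the residue classes explicitly, and the verification does go through in all eight admissible pairs (your count of "five pairs" is also off, harmlessly). With that routine compatibility check added, your argument is complete.
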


To prove Proposition \ref{exRT}, we need the following four lemmas.

\begin{lmm}
\label{exqsplit}

Let $K$ be $\Q(\sqrt{3},\sqrt{-5})$ (\resp $\Q(\zeta_5)$, \resp $\Q(\zeta_{17})$).
Then a prime number $q$ splits completely in $K$ if and only if
$q\equiv 1,23,47,49\bmod{60}$
(\resp $q\equiv 1\bmod{5}$, \resp $q\equiv 1\bmod{17}$).

\end{lmm}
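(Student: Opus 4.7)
For the two cyclotomic cases $K = \Q(\zeta_5)$ and $K = \Q(\zeta_{17})$, I would simply invoke the classical splitting law in cyclotomic fields: a prime $q$ with $q \nmid n$ splits completely in $\Q(\zeta_n)$ if and only if $q \equiv 1 \pmod{n}$, since its Frobenius corresponds to $q \bmod n$ under the isomorphism $\Gal(\Q(\zeta_n)/\Q) \cong (\Z/n\Z)^\times$. The ramified prime $q = n$ does not split, so no exceptional case arises.

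For the biquadratic case $K = \Q(\sqrt{3}, \sqrt{-5})$, the cleanest route is to observe that $K \subset \Q(\zeta_{60})$. Indeed $\sqrt{3} = \zeta_{12} + \zeta_{12}^{-1} \in \Q(\zeta_{12})$ and $\sqrt{-5} = i\sqrt{5} \in \Q(\zeta_{20})$, while $\Q(\zeta_{12}) \cdot \Q(\zeta_{20}) = \Q(\zeta_{60})$. Under the identification $\Gal(\Q(\zeta_{60})/\Q) \cong (\Z/60\Z)^\times$ sending $\sigma_a$ to $a$ with $\sigma_a(\zeta_{60}) = \zeta_{60}^a$, the subgroup $H := \Gal(\Q(\zeta_{60})/K)$ consists of those $a$ with $\sigma_a(\sqrt{3}) = \sqrt{3}$ and $\sigma_a(\sqrt{-5}) = \sqrt{-5}$, and for $q \nmid 60$ the splitting of $q$ in $K$ is governed by whether $q \bmod 60 \in H$.

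It then remains to identify $H$ explicitly. From $\sqrt{3} = \zeta_{12} + \zeta_{12}^{-1}$ one gets $\sigma_a(\sqrt{3}) = \sqrt{3}$ iff $a \equiv \pm 1 \pmod{12}$. Writing $\sqrt{-5} = i \cdot \sqrt{5}$ and using $\sigma_a(i) = i^a$ together with $\sigma_a(\sqrt{5}) = \left(\frac{a}{5}\right)\sqrt{5}$ (via the standard quadratic Gauss sum inside $\Q(\zeta_5)$), one finds $\sigma_a(\sqrt{-5}) = \sqrt{-5}$ iff $a \bmod 20 \in \{1, 3, 7, 9\}$. Intersecting these two conditions over the sixteen classes of $(\Z/60\Z)^\times$ by CRT yields exactly $H = \{1, 23, 47, 49\}$, and a quick check (e.g.\ $23^2 \equiv 49$ and $23 \cdot 47 \equiv 1 \pmod{60}$) confirms this is a subgroup of the correct order $4 = [K : \Q]$. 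The only real obstacle is this bookkeeping; no tool deeper than quadratic reciprocity and the Galois theory of cyclotomic fields is needed.
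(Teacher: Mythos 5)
Your proof is correct and, at bottom, the same as the paper's: the paper simply quotes the quadratic splitting laws $q\equiv\pm1\bmod{12}$ for $\Q(\sqrt{3})$ and $q\equiv 1,3,7,9\bmod{20}$ for $\Q(\sqrt{-5})$, combines them mod $60$, and calls the cyclotomic cases trivial, which is exactly the computational content of your argument. Your detour through $\Q(\zeta_{60})$ and the fixing subgroup $H\subseteq(\Z/60\Z)^\times$ just re-derives those two quadratic conditions inside the cyclotomic framework before intersecting them, so it is a more self-contained packaging of the same calculation rather than a different method.
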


\begin{proof}

A prime number $q$ splits in $\Q(\sqrt{3})$ (\resp $\Q(\sqrt{-5})$)
if and only if
$q\equiv\pm 1\bmod{12}$
(\resp $q\equiv 1,3,7,9\bmod{20}$).
Then the assertion for $\Q(\sqrt{3},\sqrt{-5})$ follows.
The rest of the assertions are trivial.

\end{proof}

\begin{lmm}
\label{exB-qM2}

Let $d$ be $6$ (\resp $10$, \resp $22$).
For a prime number $q$, we have
$B\otimes_{\Q}\Q(\sqrt{-q})\not\cong\M_2(\Q(\sqrt{-q}))$
if and only if
$q\equiv 2,5,7,11,17,23\bmod{24}$

\noindent
(\resp $q\equiv 1,7,9,11,19,21,23,29,31,39\bmod{40}$, 

\noindent
\resp $q\equiv 2,7,13,15,17,19,21,23,29,31,35,39,41,43,
47,51,57,61,63,65,71,73,79,83,85,87$

\noindent
$\bmod{88}$).

\end{lmm}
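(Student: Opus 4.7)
The plan is to reduce the statement to a purely local splitting condition at each prime dividing $d$, and then translate those local conditions into congruences on $q$ by means of quadratic reciprocity.

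The key general fact is the following standard criterion from the theory of quaternion algebras: for the indefinite quaternion division algebra $B$ over $\Q$ with discriminant $d$ and for any quadratic field $L/\Q$, the tensor product $B\otimes_{\Q}L$ is isomorphic to $\M_2(L)$ if and only if, for every rational prime $p\mid d$, the completion $L\otimes_{\Q}\Q_p$ is a (nontrivial) field extension of $\Q_p$. Equivalently, $L$ splits $B$ precisely when no prime $p\mid d$ splits in $L$. Specializing to $L=\Q(\sqrt{-q})$, the condition to be proved becomes: $B\otimes_{\Q}\Q(\sqrt{-q})\not\cong\M_2(\Q(\sqrt{-q}))$ if and only if \emph{some} prime $p\mid d$ splits in $\Q(\sqrt{-q})$.

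Next I would record, for each $p\in\{2,3,5,11\}$ relevant to the three values of $d$, the splitting behavior of $p$ in $\Q(\sqrt{-q})$. The prime $2$ splits iff $-q\equiv 1\pmod{8}$, hence iff $q\equiv 7\pmod{8}$ (for $q$ odd); for $q=2$ the prime $2$ ramifies rather than splits. For an odd prime $p\ne q$, the prime $p$ splits in $\Q(\sqrt{-q})$ iff $\left(\tfrac{-q}{p}\right)=1$. Using $\left(\tfrac{-1}{3}\right)=-1$, $\left(\tfrac{-1}{5}\right)=+1$, and $\left(\tfrac{-1}{11}\right)=-1$, this reduces to: $3$ splits iff $q\equiv 2\pmod{3}$; $5$ splits iff $q\equiv \pm 1\pmod{5}$; and $11$ splits iff $q\bmod 11\in\{2,6,7,8,10\}$ (the non-residues modulo $11$).

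Finally, I would assemble the local conditions using the Chinese Remainder Theorem. For $d=6$ the disjunction ``$q\equiv 7\pmod{8}$ or $q\equiv 2\pmod{3}$'', read modulo $24$ and intersected with the residues coprime to $24$, together with the case $q=2$ adjoined by a direct check, yields exactly the six classes $\{2,5,7,11,17,23\}\pmod{24}$. For $d=10$ the analogous enumeration modulo $\mathrm{lcm}(8,5)=40$ yields the ten listed classes, with $q=2$ and $q=5$ (where the other ramified prime is inert or ramified, respectively) excluded by direct inspection. For $d=22$ the same argument modulo $\mathrm{lcm}(8,11)=88$ produces the $32$ listed classes, with $q=11$ excluded directly (as $2$ is inert in $\Q(\sqrt{-11})$). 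The only real obstacle is the bookkeeping for $d=22$, where one must sift an $88$-entry residue table to extract the union of the two families; no new idea beyond patience is required.
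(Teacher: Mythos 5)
Your proposal is correct and follows essentially the same route as the paper: the criterion that $B\otimes_{\Q}\Q(\sqrt{-q})\not\cong\M_2(\Q(\sqrt{-q}))$ exactly when some prime divisor of $d$ splits in $\Q(\sqrt{-q})$, followed by the splitting conditions for $2,3,5,11$ (namely $q\equiv 7\bmod 8$, $q\equiv 2\bmod 3$, $q\equiv\pm1\bmod 5$, $q$ a non-residue mod $11$) and a Chinese Remainder Theorem assembly of the congruence classes. Your explicit handling of the edge cases $q\mid d$ is slightly more careful than the paper's, but the argument is the same.
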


\begin{proof}

For a quadratic field $L$, we have $B\otimes_{\Q}L\not\cong\M_2(L)$
if and only if there is a prime divisor of $d$ which splits in $L$.
The prime number $2$ (\resp $3$, \resp $5$, \resp $11$)
splits in $\Q(\sqrt{-q})$ if and only if
$q\equiv -1\bmod{8}$
(\resp $q\equiv -1\bmod{3}$,
\resp $q\equiv \pm 1\bmod{5}$,
\resp $q\equiv 2,6,7,8,10\bmod{11}$).
Then we have done.

\end{proof}

\begin{lmm}
\label{exMBKinfty}

Let $d\in\{6,10,22\}$ and $K\in\{\Q(\sqrt{3},\sqrt{-5}), \Q(\zeta_5), \Q(\zeta_{17})\}$.
Assume $(d,K)\ne(22,\Q(\zeta_5))$.
Then $\sharp M^B(K)=\infty$.

\end{lmm}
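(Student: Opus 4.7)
By (\ref{eqMB}), for each $d \in \{6,10,22\}$ the Shimura curve $M^B$ is $\Q$-isomorphic to the smooth conic $C_d : x^2 + y^2 + c_d = 0$, with $c_d \in \{3,2,11\}$ respectively. As observed just after (\ref{eqMB}), it is enough to produce a single $K$-rational point of $C_d$, for then $M^B \otimes_{\Q} K \cong \pP^1_K$ and hence $\sharp M^B(K) = \infty$. So the plan is, for each admissible pair $(d,K)$, to exhibit a $K$-solution of $x^2 + y^2 = -c_d$.

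For $K = \Q(\sqrt{3},\sqrt{-5})$ one proceeds directly by inspection: the field contains $\sqrt{3}$, $\sqrt{-5}$ and $\sqrt{-15}$, and a suitable pair of these square roots produces a solution for each $c_d \in \{3,2,11\}$ (for instance $(\sqrt{-15},\,2\sqrt{3})$, $(\sqrt{-5},\,\sqrt{3})$ and $(\sqrt{-15},\,2)$, respectively).

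For the two cyclotomic cases $K \in \{\Q(\zeta_5),\Q(\zeta_{17})\}$ I would argue via Hasse--Minkowski: the conic $C_d$ acquires a $K$-point iff the quaternion algebra $(-1,-c_d)_{\Q} \otimes_{\Q} K$ splits at every place of $K$. A short Hilbert-symbol computation identifies the global ramification set of $(-1,-c_d)_{\Q}$ as $\{\infty,\, c_d\}$ in all three cases. Both cyclotomic fields above are totally complex, so the archimedean places pose no obstruction, and only the finite ramified place $p = c_d$ remains. At such $p$, the local algebra splits over $K_w$ iff $[K_w : \Q_p]$ is even; since $p$ is unramified in $\Q(\zeta_n)$ for $n \in \{5,17\}$ and $p \in \{2,3,11\}$, this local degree is simply the multiplicative order of $p$ in $(\Z/n\Z)^{\times}$. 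A case-by-case tally gives these orders as $4,4,1,8,16,16$ for $(n,p) \in \{(5,2),(5,3),(5,11),(17,2),(17,3),(17,11)\}$: all even except the entry $(n,p) = (5,11)$, which is exactly the excluded pair $(d,K) = (22,\Q(\zeta_5))$.

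No individual step is hard; the main point of care is the Hilbert-symbol computation that pins down the global ramification of $(-1,-c_d)_{\Q}$, and not conflating the present question with the representability statement of Theorem \ref{fieldofdefk}, which demands the strictly stronger condition $B \otimes_{\Q} K \cong \M_2(K)$ and is not needed for the mere existence of $K$-points on $M^B$.
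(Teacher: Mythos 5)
Your proof is correct and follows essentially the same route as the paper: reduce to showing the genus-zero conic (\ref{eqMB}) acquires a $K$-point, then apply the local--global principle, where the only local obstructions lie at $\infty$ and at $p=3,2,11$ and vanish because the relevant local degrees are even, failing exactly for the excluded pair $(22,\Q(\zeta_5))$ where $K_v=\Q_{11}$. The differences are cosmetic: you exhibit explicit global points for $\Q(\sqrt{3},\sqrt{-5})$ and run the local analysis via Hilbert symbols and the parity of $[K_w:\Q_p]$, whereas the paper uses Hensel's lemma plus the $(e_p,f_p,g_p)$ data to show each relevant completion contains a quadratic extension of $\Q_p$.
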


\begin{proof}

It suffices to show $M^B(K)\ne\emptyset$.
Looking at (\ref{eqMB}), it is enough to show
$M^B(K_v)\ne\emptyset$ for any place $v$ of $K$,
owing to the Hasse principle.
If $v$ is infinite, it is trivial since $K_v=\C$.
For $d=6$ (\resp $d=10$, \resp $d=22$) and a prime number $p$,
we have $M^B(\Q_p)=\emptyset$ if and only if
$p=3$ (\resp $p=2$, \resp $p=11$).
(To show $M^B(\Q_p)\ne\emptyset$, if $p\ne 2$, consider
the equations in (\ref{eqMB}) modulo $p$ and use Hensel's lemma;
if $p=2$, find explicit solutions of the equations
$(\sqrt{-7})^2+2^2+3=0$ with $\sqrt{-7}\in\Q_2$
and
$(\sqrt{-15})^2+2^2+11=0$ with $\sqrt{-15}\in\Q_2$.
To show $M^B(\Q_p)=\emptyset$, we use the fact that
the equation $x^2+y^2+p=0$ has a solution in $\Q_p$
if and only if $p\equiv 1\bmod{4}$.)
For any quadratic extension $L$ of $\Q_p$,
we have $M^B(L)\ne\emptyset$.
So, for $d=6$ (\resp $d=10$, \resp $d=22$), it suffices to show
that $K_v$ contains a quadratic extension of $\Q_3$ (\resp $\Q_2$, \resp $\Q_{11}$)
for any place $v$ of $K$ above $3$ (\resp $2$, \resp $11$).

For a prime number $p$, let $e_p(K)$ (\resp $f_p(K)$, \resp $g_p(K)$)
be the ramification index of $p$ in $K/\Q$
(\resp the degree of the residual field extension above $p$ in $K/\Q$,
\resp the number of primes of $K$ above $p$).
For

$K=\Q(\sqrt{3},\sqrt{-5})$ (\resp $\Q(\zeta_5)$, \resp $\Q(\zeta_{17})$),
we have

$(e_3(K),f_3(K),g_3(K))=(2,1,2)$ (\resp $(1,4,1)$, \resp $(1,16,1)$),

$(e_2(K),f_2(K),g_2(K))=(2,1,2)$ (\resp $(1,4,1)$, \resp $(1,8,2)$),

$(e_{11}(K),f_{11}(K),g_{11}(K))=(1,2,2)$ (\resp $(\textbf{1},\textbf{1},\textbf{4})$, \resp $(1,16,1)$).

\noindent
Then $K_v$ contains a quadratic extension of $\Q_3$ (\resp $\Q_2$, \resp $\Q_{11}$)
for any place $v$ of $K$ above $3$ (\resp $2$, \resp $11$)
unless $K=\Q(\zeta_5)$ and $v|11$.
Note that if $K=\Q(\zeta_5)$ and $v|11$, then $K_v=\Q_{11}$.
For the proof of the next lemma, we add

$(e_5(K),f_5(K),g_5(K))=(2,2,1)$ (\resp $(4,1,1)$, \resp $(1,16,1)$).

\end{proof}

\begin{lmm}
\label{exBKM2}

Let $d\in\{6,10,22\}$ and $K\in\{\Q(\sqrt{3},\sqrt{-5}), \Q(\zeta_5), \Q(\zeta_{17})\}$.
Assume $(d,K)\ne(22,\Q(\zeta_5))$.
Then $B\otimes_{\Q}K\cong\M_2(K)$.

\end{lmm}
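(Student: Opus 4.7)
The plan is to reduce the statement to a local check at the ramified primes of $B$, and then to read off the conclusion directly from the ramification data assembled in the proof of Lemma \ref{exMBKinfty}. Recall that $B$ is a quaternion algebra over $\Q$ ramified exactly at the prime divisors of $d$ (and split at $\infty$, since $B$ is indefinite). By the local-global principle for the Brauer group, $B\otimes_{\Q}K\cong\M_2(K)$ holds if and only if $B\otimes_{\Q}K_v\cong\M_2(K_v)$ for every place $v$ of $K$. At places $v$ not lying above a prime divisor of $d$ (including archimedean places, since $K$ is totally complex) this is automatic. At a place $v\mid p$ with $p\mid d$, the local algebra $B\otimes_{\Q}\Q_p$ is the quaternion division algebra over $\Q_p$, and it is standard that its base change to a finite extension $K_v/\Q_p$ splits if and only if $K_v$ contains a quadratic extension of $\Q_p$, equivalently if and only if $[K_v:\Q_p]=e_p(K)f_p(K)$ is even.

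Thus the proof reduces to verifying, for each admissible pair $(d,K)$, that $e_p(K)f_p(K)$ is even for every prime $p$ dividing $d$. The ramification indices and residue degrees needed are exactly those tabulated at the end of the proof of Lemma \ref{exMBKinfty}: for $K=\Q(\sqrt{3},\sqrt{-5})$ one has $e_pf_p=2$ at each of $p=2,3,11$; for $K=\Q(\zeta_5)$ one has $e_pf_p=4$ at $p=2,3$ but $e_{11}f_{11}=1$; and for $K=\Q(\zeta_{17})$ one has $e_pf_p\in\{8,16\}$ at $p=2,3,11$. Hence the parity condition holds at every prime dividing $d$ in every case except $(d,K)=(22,\Q(\zeta_5))$, where it fails precisely because $11$ splits into four primes of residue degree $1$ in $\Q(\zeta_5)$, making $K_v=\Q_{11}$.

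There is no real obstacle: the technical content has already been unpacked in the previous lemma, so the argument amounts to citing the criterion above and listing the local degrees. The only point requiring mild care is justifying the exclusion: one must observe that at $p=11$ and $K=\Q(\zeta_5)$ the completion is $\Q_{11}$ itself, so $B\otimes_{\Q}\Q_{11}$ remains a division algebra, which is exactly the content of Lemma \ref{exB-qM2} specialized to this case (and consistent with the exclusion in the hypothesis).
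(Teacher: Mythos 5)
Your overall strategy is the same as the paper's: reduce to splitting of $B\otimes_{\Q}K_v$ at the finitely many places $v$ lying over prime divisors of $d$, and deduce splitting there from the fact that $K_v$ is a nontrivial (even-degree) extension of $\Q_p$, using the ramification data computed in the proof of Lemma \ref{exMBKinfty}. However, your case check is incomplete: the set of primes you must examine depends on $d$, namely $\{2,3\}$ for $d=6$, $\{2,5\}$ for $d=10$, and $\{2,11\}$ for $d=22$, whereas you verify the parity of $e_pf_p$ only at $p\in\{2,3,11\}$. You have apparently carried over the list of primes $3,2,11$ that was relevant in Lemma \ref{exMBKinfty} (where they are the primes with $M^B(\Q_p)=\emptyset$), but for splitting of the quaternion algebra every ramified prime of $B$ matters, so the prime $5\mid 10$ must also be checked. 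The paper is explicit about this: the last line of the proof of Lemma \ref{exMBKinfty} records $(e_5(K),f_5(K),g_5(K))=(2,2,1)$, $(4,1,1)$, $(1,16,1)$ precisely ``for the proof of the next lemma.'' With that data the missing verification goes through ($e_5f_5=4,4,16$ in the three fields), so the gap is easily filled, but as written your argument does not establish the case $d=10$.

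A smaller point: your parenthetical claim that $B\otimes_{\Q}\Q_p$ splits over $K_v$ \emph{if and only if} $K_v$ contains a quadratic extension of $\Q_p$ is not quite right as an equivalence (an even-degree local extension need not contain a quadratic subextension, e.g.\ quartic $2$-adic fields with Galois closure group $A_4$); the correct criterion, which is the one you actually use, is that the local degree $[K_v:\Q_p]=e_p(K)f_p(K)$ is even. The paper instead argues that no prime divisor of $d$ splits completely in $K$ (outside the excluded pair), so that $K_v$ contains a quadratic extension of $\Q_p$ for unramified-or-suitably-ramified reasons visible from its table; both routes are fine once all primes dividing $d$ are actually covered.
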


\begin{proof}

It suffices to show $B\otimes_{\Q}K_v\cong\M_2(K_v)$
for any place $v$ of $K$.
It is trivial if $v$ is infinite, or if $v$ is finite and does not divide $d$.
By the computation in the proof of Lemma \ref{exMBKinfty},
no prime divisor of $d$ splits completely in $K$
unless $(d,K)=(22,\Q(\zeta_5))$.
So, if $(d,K)\ne(22,\Q(\zeta_5))$, and if $v$ is finite and divides $d$,
then $K_v$ contains a quadratic extension of
$\Q_{p(v)}$,
where $p(v)$ is the residual characteristic of $v$.
In such a case, $B\otimes_{\Q}K_v\cong\M_2(K_v)$.

\end{proof}

\noindent
(Proof of Proposition \ref{exRT})

The only imaginary quadratic subfields of $\Q(\sqrt{3},\sqrt{-5})$
are $\Q(\sqrt{-5})$ and $\Q(\sqrt{-15})$, which are not of class number one.
Since the extension $\Q(\sqrt{3},\sqrt{-5})/\Q(\sqrt{-5})$
(\resp $\Q(\sqrt{3},\sqrt{-5})/\Q(\sqrt{-15})$)
is ramified over the primes above $3$ (\resp $2$),
the field $\Q(\sqrt{3},\sqrt{-5})$ is not the Hilbert class field of
$\Q(\sqrt{-5})$ (\resp $\Q(\sqrt{-15})$).
The only quadratic subfield of $\Q(\zeta_5)$ (\resp $\Q(\zeta_{17})$)
is $\Q(\sqrt{5})$ (\resp $\Q(\sqrt{17})$).
So, none of $\Q(\sqrt{3},\sqrt{-5}), \Q(\zeta_5), \Q(\zeta_{17})$
contains the Hilbert class field of any imaginary quadratic field.
By Lemmas \ref{exqsplit} and \ref{exB-qM2}, there is a prime number $q$
which splits completely in $K$ and satisfies 
$B\otimes_{\Q}\Q(\sqrt{-q})\not\cong\M_2(\Q(\sqrt{-q}))$.
Then Lemma \ref{irredRT} and Theorem \ref{galrepirred} imply
$\sharp\sA(K,2)_B<\infty$.
By the remark at the end of \S \ref{secMB}, together with Lemmas \ref{exMBKinfty} and \ref{exBKM2},
there are infinitely many $\Kb$-isomorphism classes of QM-abelian surfaces
$(A,i)$ by $\cO$ over $K$.


\end{document}